 \newtheorem{theorem}{Theorem}[section]
\newtheorem{proposition}[theorem]{Proposition}
\newtheorem{lemma}[theorem]{Lemma}
\newtheorem{corollary}[theorem]{Corollary}
\theoremstyle{definition}
\newtheorem{definition}[theorem]{Definition}
\theoremstyle{definition}
\newtheorem{remark}[theorem]{Remark}
\theoremstyle{definition}
\theoremstyle{definition}
\newtheorem{example}[theorem]{Example}
\newtheorem{notation}[theorem]{Notation}
\def\hs#1{\hskip#1mm}%
\def\su{{\subseteq \hs1}}
\def\cc{{\mathcal{C}}}
\def\ce{{\mathcal{E}}}
\def\c{{\mathcal{C}}}
\def\cm{{\mathcal{M}}}
\def\ca{{\mathcal{A}}}
\def\cd{{\mathcal{D}}}
\def\ch{{\mathcal{H}}}
\def\cx{{\mathcal{X}}}
\def\ck{{\mathcal{K}}}
\def\cn{{\mathcal{N}}}
\def\mc#1{\mathcal{#1}}
\def\mb#1{\mathbf{#1}}
\def\la{{\langle}}
\def\ra{{\rangle}}
\DeclareMathOperator{\iso}{Iso}
\DeclareMathOperator{\mon}{Mon}
\def\itemi{\item[{\rm(i)}]}
\def\itemii{\item[{\rm(ii)}]}
\def\itemiii{\item[{\rm(iii)}]}
\def\itemiv{\item[{\rm(iv)}]}
\def\itema{\item[{\rm(a)}]}
\def\itemb{\item[{\rm(b)}]}
\def\itemc{\item[{\rm(c)}]}
\def\itemd{\item[{\rm(d)}]}
\def\sqdg#1#2#3#4#5#6#7#8#9{$\xymatrix{ #1 \ar[rr]^{#5} \ar@{} \ar[d]_{#6} & & #2 \ar[d]^{#7} \ar @{} [dll]
		|{#9}\\
		#3 \ar[rr]_{#8} & & #4
	}$}
\def\ult{\mbox{\begin{picture}(7,10)
		\put(1,0){\line(1,2){5}}
		\put(1,0){\line(0,1){10}
			\put(0,10){\line(1,0){5}}}
		\end{picture}
}}
 \def\ulto#1{\overset{\kern-.35em #1}{\ult}}
 \def\ulte{\overset{\kern-.35em \ce}{\ult}}
 \def\ultqd{\overset{\kern-.35em \ce^{QC}}{\ult}}
\newbox\abox\newdimen\heit
\begin{document}

 	\title[On General Closure Operators ...]{On
	General Closure Operators and Quasi Factorization Structures}
	
	\author[S. Sh. Mousavi, S. N. Hosseini, A. Ilaghi-Hosseini]{Seyed Shahin
	Mousavi, Seyed Naser Hosseini and Azadeh
		Ilaghi-Hosseini}
	
	\address{Department of Pure Mathematics, Shahid Bahonar University of
	Kerman, Kerman, Iran.}
	
	\email{smousavi@uk.ac.ir}
	
	\address{Department of Pure Mathematics, Shahid Bahonar University of
	Kerman, Kerman, Iran.}
	
	\email{nhoseini@uk.ac.ir}
	
	\address{Department of Pure Mathematics, Shahid Bahonar University of
	Kerman, Kerman, Iran.}
	
	\email{a.ilaghi@math.uk.ac.ir}
	
	\subjclass[2010]{18A32, 06A15}
	
	\keywords{quasi right (left) factorization structure, quasi mono (epi),
	quasi factorization structure, (quasi weakly hereditary, quasi idempotent)
	general closure operator.}
	
\begin{abstract}

 	In this article the notions of quasi mono (epi) as a generalization of mono (epi), (quasi weakly hereditary) general closure operator $\mb{C}$ on a category $\cx$ with respect to a class $\cm$ of morphisms, and quasi factorization structures in a category $\cx$ are introduced. It is shown that under certain conditions,
if $(\ce, \cm)$ is a quasi factorization structure in $\cx$, then $\cx$ has a quasi right $\cm$-factorization structure and a quasi left $\ce$-factorization structure. It is also shown that for a quasi weakly hereditary and quasi idempotent QCD-closure operator with respect to a certain class $\cm$, every quasi factorization structure $(\ce, \cm)$ yields a quasi factorization structure relative to the given closure operator; and that for a closure operator with respect to a certain class $\cm$, if the pair of classes of quasi dense and quasi closed morphisms forms a quasi factorization structure, then the closure operator is both quasi weakly hereditary and quasi idempotent. Several illustrative examples are provided.

 {\bf Keywords} quasi right (left) factorization structure, quasi mono (epi),
quasi factorization structure.
\end{abstract}

 \maketitle

 \section{Introduction and Preliminaries}
Closure operators have been around for almost one century in the context of categories of topological spaces and lattices. In \cite{s}, Salbany introduces a particular closure operator in the category of topological spaces. This idea was later transformed to an arbitrary category, which led to the general concept of categorical closure operators, \cite{dt}, \cite{dg}, \cite{dgt}. Weakly hereditary and idempotent closure operators play an important role, as they arise from factorization structures. In \cite{mh11}, quasi right factorization structures were introduced and their connection with closure operators was investigated, while quasi left factorization structures appear in \cite{hm}.

 There are many important structures that are not factorization structures nor even weak factorization structures; however they are quasi factorization structures, as introduced in this article. Many examples of such structures are provided and the connections between quasi right factorization structures, quasi left factorization structures, quasi factorization structures and closure operators are investigated.

 In section 2, to develop some theory related to closure operators in the more
general context of a quasi right factorization structure $\cm$ on a category
$\cx$, the notions of quasi mono and quasi epi are given. Then we will study
some preliminary results and we will provide some examples of these notions.
The strong point of these examples is to provide epimorphisms which are quasi
mono and monomorphisms which are quasi epi. In section 3, the definition of a
general closure operator on a category $\cx$ with respect to the class $\cm$ of
morphisms is introduced, some related results and several examples are also
given. In section 4 after defining quasi weakly hereditary closure operator, we
prove that for a quasi idempotent closure operator we have a quasi right
factorization structure and for a quasi weakly hereditary closure operator
under some conditions we have a quasi left factorization structure. In section
5, for morphism classes $\ce$ and $\cm$, the notion of $(\ce, \cm)$-quasi
factorization structure is introduced and examples of quasi factorization
structures that are not weak factorization structures are furnished. It is
shown that if $(\ce, \cm)$ is a quasi factorization structure in $\cx$, then
$\cx$ has quasi right $\cm$-factorization structure provided that $\cm$ has
$\cx$-pullbacks and it has quasi left $\ce$-factorization structure provided
that $\cm \su \mon(\cx)$, the class of monos, and $\ce$ has $\cx$-pushouts.
It is also shown that for a quasi weakly hereditary and quasi idempotent QCD-closure operator with respect to a class $\cm$ that is contained in the class of quasi monos and is closed under composition, every quasi factorization structure $(\ce, \cm)$ yields a quasi factorization structure relative to the given closure operator. Finally it is proved that for a closure operator with respect to a class $\cm$ that is contained in the class of strongly quasi monos and is a codomain, if the pair of classes of quasi dense and quasi closed morphisms forms a quasi factorization structure, then the closure operator is both quasi weakly hereditary and quasi idempotent.

 To this end we will give some basic definitions and results which will be used in the following sections.

 \begin{definition}\cite{mh11}.\label{qrf}
	 Let $\cm$ be a class of morphisms in
	$\cx$ and for every object $ X$ of $ \cx$, $ \cm\slash X$ be the class of all morphisms with codomain $X$. We say that $\cx$ has {\it quasi right $\cm$-factorizations} or
	$\cm$ is a {\it quasi right factorization structure} in $\cx$, whenever
	for every morphisms $\hs{-1}\xymatrix{ Y \ar[r]^f & X}\hs{-1}$ in $\cx$, there
	exists $\xymatrix{ M \ar[r]^{m_{f}} & X}\hs{-1}\in \cm\slash X$ such that:
	
	(a) $f=m_fg$ for some $g$;
	
	(b) if there exists $m\in{\cm\slash X}$ such that $f=mg$ for some $g$, then $m_f=mh$ for some $h$.
	
	$m_f$ is called a quasi right part of $f$.
	
\end{definition}

 With $\langle m \rangle = \{ mh : h \in \cx \hskip3mm \hbox{and} \hskip3mm mh \hskip3mm \hbox{ is defined} \}$ denoting the sieve generated by $m$, see \cite{mm}, (a) is equivalent to:

 (a$'$) $\langle f \rangle \subseteq \langle m_{f} \rangle $;

 and (b) is equivalent to:

 (b$'$) if there exists $m\in{\cm\slash X}$ such that $\langle f \rangle \subseteq \langle m \rangle$,
then $\langle m_{f} \rangle\subseteq \langle m \rangle$.

 Note that right $\cm$-factorizations as defined in \cite{dt} are quasi right $\cm$-factorizations.

 \begin{lemma}\label{L:quasi right} \cite{mh11}. Suppose $\cx$ has quasi right 
 	$\cm$-factorizations. Let $f$ be a morphism in $\cx$ and $m_f$ be a quasi
	right part of $f$.
	\begin{itemize}
		\itema If $f\in \cm$, then $\langle m_f \rangle = \langle f \rangle$.
		
		\itemb $m$ is a quasi right part of $f$ if and only if $m\in\cm$ and
		$\langle m\rangle=\langle m_f\rangle$.
		
		\itemc If $\langle f \rangle \subseteq \langle g \rangle$, then $ \langle m_f \rangle \subseteq \langle m_g \rangle$.
		
		\itemd If $\langle g \rangle = \langle f \rangle$, then $m_f$ is a quasi right part of $g$.
	\end{itemize}
\end{lemma}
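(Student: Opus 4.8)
The plan is to work entirely through the sieve reformulations (a$'$) and (b$'$), since together they characterize $\langle m_f \rangle$ in a way that is manifestly insensitive to the particular representatives $f$ and $m_f$. Precisely, (a$'$) and (b$'$) assert that $\langle m_f \rangle$ is the \emph{least} element, under inclusion, of the family of sieves $\{\langle m \rangle : m \in \cm\slash X,\ \langle f \rangle \subseteq \langle m \rangle\}$: by (a$'$) together with $m_f \in \cm\slash X$ it lies in this family, and by (b$'$) it sits below every member of it. A least element of a poset is unique whenever it exists, and this single observation drives all three parts.

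For (a), I would simply instantiate (b$'$) at $m = f$: this is legitimate because $f \in \cm$ gives $f \in \cm\slash X$, and $\langle f \rangle \subseteq \langle f \rangle$ holds trivially, so (b$'$) yields $\langle m_f \rangle \subseteq \langle f \rangle$. Combined with (a$'$), namely $\langle f \rangle \subseteq \langle m_f \rangle$, this gives $\langle m_f \rangle = \langle f \rangle$. For (c), I would first note that $\langle g \rangle = \langle f \rangle$ forces $g$ and $f$ to share the codomain $X$, so the statement is well posed, and then verify that $m_f$ satisfies the two defining conditions relative to $g$. Condition (a$'$) reads $\langle g \rangle \subseteq \langle m_f \rangle$, immediate from $\langle g \rangle = \langle f \rangle \subseteq \langle m_f \rangle$; condition (b$'$) follows because any $m \in \cm\slash X$ with $\langle g \rangle \subseteq \langle m \rangle$ also satisfies $\langle f \rangle \subseteq \langle m \rangle$, whence $\langle m_f \rangle \subseteq \langle m \rangle$ by the minimality of $m_f$ for $f$.

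The two implications of (b) are handled by the same uniqueness principle. For the forward direction, if $m$ is a quasi right part of $f$ then $m \in \cm$ by definition, and both $\langle m \rangle$ and $\langle m_f \rangle$ are least elements of the family above; applying the minimality of each to the other gives $\langle m \rangle \subseteq \langle m_f \rangle$ and $\langle m_f \rangle \subseteq \langle m \rangle$, hence equality. Conversely, given $m \in \cm$ with $\langle m \rangle = \langle m_f \rangle$, I would transport the defining conditions of $m_f$ across this equality: (a$'$) becomes $\langle f \rangle \subseteq \langle m_f \rangle = \langle m \rangle$, and (b$'$) follows from $\langle m \rangle = \langle m_f \rangle \subseteq \langle m' \rangle$ whenever $m' \in \cm\slash X$ has $\langle f \rangle \subseteq \langle m' \rangle$.

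The one point requiring a moment's care---and the closest thing here to a genuine obstacle---is the bookkeeping of objects rather than any computation: one must observe that the sieve equality $\langle m \rangle = \langle m_f \rangle$ forces $m$ to have codomain $X$, so that the hypothesis $m \in \cm$ upgrades to $m \in \cm\slash X$ and the phrase \emph{quasi right part} is even applicable to $m$. Beyond ensuring these codomains line up, the argument is pure order-theoretic bookkeeping built on the minimality characterization, with no further categorical content.
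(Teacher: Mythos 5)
Your proof is correct and complete: reading (a$'$) and (b$'$) as saying that $\langle m_f\rangle$ is the least element of $\{\langle m\rangle : m\in\cm/X,\ \langle f\rangle\subseteq\langle m\rangle\}$ makes all three parts routine order-theoretic consequences, and your codomain check (that $\langle m\rangle=\langle m_f\rangle$ forces $\mathrm{cod}(m)=X$, since $m_f\in\langle m\rangle$) closes the only genuine gap in such an argument. The paper itself gives no proof, deferring to \cite{mh11}, but the sieve reformulations it records immediately after the definition exist precisely to support this argument, so your route coincides with the intended one.
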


 The class of all isomorphisms in $\cx$ is denoted by $\text{Iso}(\cx)$.

 \begin{proposition}\label{p:composition of qrf by iso}
	Suppose $\cm$ is closed under composition with isomorphisms on the left,
	i.e., $m\in \cm$, $\alpha\in \iso(\cx)$, and $\alpha m$ defined, yields
	that $\alpha m \in \cm$. If $f$ is a morphism in $\cx$ and $m_f$ is a quasi
	right part of $f$, then $\alpha m_f$ is a quasi right part of $\alpha f$.
\end{proposition}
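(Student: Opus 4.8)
The plan is to verify directly that $\alpha m_f$ satisfies conditions (a) and (b) of Definition 1.1 with respect to the morphism $\alpha f$, writing $\alpha\colon X\to X'$ for the isomorphism (so that $\alpha m_f$ being defined forces $\mathrm{dom}(\alpha)=X$, whence $\alpha m_f$ and $\alpha f$ share the codomain $X'$). The two things I must establish are that $\alpha m_f$ lands in $\cm\slash X'$ and that both the factorization property and the universal property transport across $\alpha$.

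First I would record membership in $\cm$. Since $m_f\in\cm$, $\alpha\in Iso(\cx)$, and the composite $\alpha m_f$ is defined, the hypothesis that $\cm$ is closed under composition with isomorphisms gives $\alpha m_f\in\cm$, hence $\alpha m_f\in\cm\slash X'$. Condition (a) is then immediate: because $m_f$ is a quasi right part of $f$, there is some $g$ with $f=m_f g$, and post-composing with $\alpha$ yields $\alpha f=(\alpha m_f)g$, which is exactly (a) for the pair $\alpha m_f,\ \alpha f$.

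The substantive step is (b), and this is where the closure hypothesis must be used a second time, now in the opposite direction. Suppose $m\in\cm\slash X'$ and $\alpha f=mg$ for some $g$. I would pass back through $\alpha^{-1}$: since $\alpha^{-1}\in Iso(\cx)$, $m\in\cm$, and $\alpha^{-1}m$ is defined, closure under composition with isomorphisms gives $\alpha^{-1}m\in\cm\slash X$. Now $f=\alpha^{-1}(\alpha f)=\alpha^{-1}mg=(\alpha^{-1}m)g$ exhibits $f$ as factoring through the $\cm\slash X$-morphism $\alpha^{-1}m$, so property (b) for the quasi right part $m_f$ supplies an $h$ with $m_f=(\alpha^{-1}m)h$. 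Composing on the left with $\alpha$ gives $\alpha m_f=mh$, which is precisely (b) for $\alpha m_f$, completing the verification.

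I do not anticipate a genuine obstacle here; the only point requiring care is recognizing that the closure hypothesis is needed in \emph{both} directions, forward to place $\alpha m_f$ in $\cm$, and backward (via $\alpha^{-1}$) to place $\alpha^{-1}m$ in $\cm$ so that the universal property of $m_f$ can be invoked. One could equivalently argue through the sieve conditions (a$'$) and (b$'$), using that pre/post-composition by an isomorphism preserves and reflects the sieve inclusions $\langle f\rangle\subseteq\langle m_f\rangle$, but the direct route above is the shortest.
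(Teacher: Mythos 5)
Your proof is correct and is precisely the direct verification the paper has in mind; the paper's own proof is given only as ``Obvious.'' Your two uses of the closure hypothesis---forward with $\alpha$ to place $\alpha m_f$ in $\cm$, and backward with $\alpha^{-1}$ to place $\alpha^{-1}m$ in $\cm$ so that property (b) of $m_f$ applies---are exactly the details being suppressed, and both are legitimate since the hypothesis quantifies over all isomorphisms in $Iso(\cx)$.
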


 \begin{proof}
	 This follows directly from the definition.
\end{proof}

 \begin{notation}
	For each composite $\hs{-1}\xymatrix{Z \ar[r]^g & X \ar[r]^f & Y}\hs{-1}$
	in $\cx$ we will denote by 	$\hs{-1}\xymatrix{f(g) : f(Z) \ar[r] &
	Y}\hs{-1}$ a chosen quasi right part of a quasi right $\cm$-factorization
	of the composite $fg$. Note that if $m'$ is another quasi right part of
	$fg$, by Lemma~\ref{L:quasi right}(b) we have $\la f(g) \ra = \la m' \ra$.
\end{notation}

\begin{remark}
	Suppose that $\cx$ has quasi right $\cm$-factorizations.
	
	(a) For each $\hs{-1}\xymatrix{M \ar[r]^m & X}\in \cm\hs{-1}$ we have $\la m(1_M) \ra = \la m \ra$.
	
	(b) For each $\hs{-1}\xymatrix{X \ar[r]^f & Y}\hs{-1}$, $\hs{-1}\xymatrix{T \ar[r]^g & Y}\hs{-1}$ and $\hs{-1}\xymatrix{Y \ar[r]^h & Z}\hs{-1}$ in $\cx$ if $\la f \ra \su \la g \ra$, then $\la h(f) \ra \su \la h(g) \ra$.
\end{remark}

 \begin{proposition}\label{p:composition by iso is not different}
	Suppose that $\cm$ is closed under composition with isomorphisms on the left. For each morphism $\hs{-1}\xymatrix{X \ar[r]^f & Y}\hs{-1}$ and isomorphism $\hs{-1}\xymatrix{Y \ar[r]^{\alpha} & Z}\hs{-1}$ in $\cx$ we have $\la \alpha(f) \ra = \la \alpha(f(1_X)) \ra$.
\end{proposition}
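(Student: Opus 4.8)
The plan is to unwind the two instances of the bracket notation and then reduce everything to Proposition~\ref{p:composition of qrf by iso} together with Lemma~\ref{L:quasi right}. First I would read off what each symbol denotes. Since $f(1_X)$ is by definition the $\cm$-part of a quasi right $\cm$-factorization of the composite $f\circ 1_X=f$, it is nothing but a quasi right part $m_f$ of $f$; in particular $m_f\in\cm$. Similarly $\alpha(f)$ is a quasi right part of the composite $\alpha f$, and $\alpha(f(1_X))=\alpha(m_f)$ is a quasi right part of the composite $\alpha m_f$. Thus the claim $\la \alpha(f)\ra=\la \alpha(f(1_X))\ra$ is really an identity of sieves generated by two quasi right parts, one attached to $\alpha f$ and one attached to $\alpha m_f$, and the strategy is to show both sieves coincide with $\la \alpha m_f\ra$.

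For the first equality I would invoke Proposition~\ref{p:composition of qrf by iso}: because $m_f$ is a quasi right part of $f$ and $\cm$ is closed under composition with isomorphisms, $\alpha m_f$ is a quasi right part of $\alpha f$. But $\alpha(f)$ is also a quasi right part of $\alpha f$ by its definition, so Lemma~\ref{L:quasi right}(b) (any two quasi right parts of the same morphism generate the same sieve) yields $\la \alpha(f)\ra=\la \alpha m_f\ra$. For the second equality I would note that $\alpha m_f\in\cm$, which is exactly the hypothesis that $\cm$ is closed under composition with the isomorphism $\alpha$ applied to $m_f\in\cm$. Then Lemma~\ref{L:quasi right}(a), applied to the morphism $\alpha m_f$, gives that its quasi right part $\alpha(m_f)=\alpha(f(1_X))$ satisfies $\la \alpha(f(1_X))\ra=\la \alpha m_f\ra$. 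Chaining the two equalities through $\la \alpha m_f\ra$ then delivers $\la \alpha(f)\ra=\la \alpha(f(1_X))\ra$.

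I do not expect a genuine obstacle here; the proof is a short two-step appeal to the cited results. The only point that needs care is bookkeeping the notation: one must keep clearly separate the quasi right part of $\alpha f$, written $\alpha(f)$, from the quasi right part of $\alpha m_f$, written $\alpha(f(1_X))$, and then verify that $\alpha m_f$ really lands in $\cm$ so that Lemma~\ref{L:quasi right}(a) is applicable. Once those two observations are in place, the sieve chain closes immediately.
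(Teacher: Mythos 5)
Your proof is correct and is precisely the argument the paper leaves implicit: the paper's own proof consists of the single word ``Obvious.'' Your chain $\la \alpha(f)\ra = \la \alpha m_f\ra = \la \alpha(f(1_X))\ra$, obtained from Proposition~\ref{p:composition of qrf by iso} (so $\alpha m_f$ is a quasi right part of $\alpha f$) together with Lemma~\ref{L:quasi right}(b) and then Lemma~\ref{L:quasi right}(a) applied to $\alpha m_f\in\cm$, is exactly the intended route, with the closure of $\cm$ under composition with isomorphisms used in just the two places you identify.
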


 \begin{proof}
	Obvious.
\end{proof}

The notion of a cosieve is dual to that of a sieve. A principal cosieve
generated by $f$ is denoted by $ \rangle f \langle $. Also
the notion of a quasi left $\ce$-factorization is dual of quasi right
$\cm$-factorization, see~\cite{hm}.

\section{Quasi mono and quasi epi}

 In this section, quasi monos and quasi epis as a generalization of monos and epis will be defined and some of their properties will be studied. Then we will provide some examples of these notions. The significant point of these examples is to provide epimorphisms which are quasi mono and monomorphisms which are quasi epi. Since these notions, especially ``quasi mono'', are used in the study of some kinds of general closure operators, some examples of quasi right $\cm$-factorization structures are given, in which the class $\cm$ is contained in the class of quasi monos.

 \begin{definition}\label{quasi mono}
	(a) A morphism $f$ in $\cx$ is called {\it quasi mono}, whenever for each morphism $a, b \in \cx$ if $f a = f b$, then $\la a \ra = \la b \ra$.
	The class of all quasi monos in $\cx$ is denoted by $QM(\cx)$.

	(b) A morphism $f$ is called {\it quasi epi}, whenever for each morphism $a, b \in \cx$ if $a f = b f$, then $\la a \ra = \la b \ra$.
The class of all quasi epis in $\cx$ is denoted by $QE(\cx)$.

\end{definition}

 \begin{proposition}\label{p:quasi mono}
	We have the following:
	\begin{itemize}
		\itema $f$ is a quasi mono if and only if for all morphisms $u$ and $v$ with the same codomain if $\la fu \ra \su \la fv \ra$, then $\la u \ra \su \la v \ra$.
		
		\itemb if $gf$ is a quasi mono, then $f$ is a quasi mono.
		
		\itemc if $f$ and $g$ are quasi monos and $gf$ is defined, then $gf$ is a quasi mono.
		
		\itemd if $\hs{-1}\xymatrix{A \ar[r]^{f} & C}\hs{-1}$ is a quasi mono, and $\hs{-1}\xymatrix{B \ar[r]^{g} & C}\hs{-1}$ is a mono and the diagram
		$$\xymatrix{A \times_{_{C}} B \ar[d]_{\pi_1} \ar @{} [dr] |{p.b} \ar[r]^{\hs{4}\pi_2} & B \ar[d]^{g}\\
			A \ar[r]_{f} & C}$$
		is a pullback in $\cx$, then $\pi_2$ is a quasi mono.
	\end{itemize} 
\end{proposition}
\begin{proof}
	The proof is straightforward.
\end{proof}

 \begin{proposition}
	We have the following:
	\begin{itemize}
		\itema if $gf$ is a quasi epi, then $g$ is a quasi epi.
		
		\itemb if $f$ is an epi and $g$ is a quasi epi and $gf$ is defined, then $gf$ is a quasi epi.
	\end{itemize} 
\end{proposition}
\begin{proof}
	The proof is straightforward.
\end{proof}

 \begin{lemma}\label{l:quasi mono split epi is iso} We have the following:
 	\begin{itemize}
 		\itema if $f$ is a quasi mono and a split epi, then $f$ is an isomorphism.
 		
 		\itemb if $f$ is a quasi epi and a split mono, then $f$ is an isomorphism.
 	\end{itemize} 
\end{lemma}

 \begin{proof}
	(a) Since $\hs{-1}\xymatrix{X \ar[r]^{f} & Y}\hs{-1}$ is a split epi, we have $\hs{-1}\xymatrix{Y \ar[r]^{s} & X}\hs{-1}$ such that $f s = 1_Y$. Thus $fsf = f$, and so $\la sf \ra = \la 1_X \ra$, because $f$ is a quasi mono. Therefore there exists a morphism $\hs{-1}\xymatrix{X \ar[r]^{t} & X}\hs{-1}$ such that $sft = 1_X$. Hence $s$ and thus $f$ is an isomorphism.
	
	(b) Similar to (a).
\end{proof}

 \begin{corollary} We have the following:
 	\begin{itemize}
 		\itema if the class of quasi monos in $\cx$ is pullback stable, then quasi monos are monos.
 		
 		\itemb if the class of quasi epis is pushout stable, then quasi epis are epis.
 	\end{itemize}
\end{corollary}

 \begin{proof}
	(a) Suppose that a quasi mono $\hs{-1}\xymatrix{X \ar[r]^{f} & Y}\hs{-1}$ is given. Consider the following pullback diagram:
	$$\xymatrix{ X \ar@{} [ddr] |{_{///}} \ar@{} [drr] |{_{///}}
		\ar@/_2pc/[ddr]_{1_X} \ar@/^2pc/[drr]^{1_X}
		\ar@{.>}[dr]|{^{\exists ! g}} & \\
		& X\times_{_{
		Y}} X \ar [d]_{\pi_1} \ar@{} [dr] |{p.b.} \ar[r]^{\pi_2}
		& X \ar [d]^{f} & \\
		& X \ar[r]_{f} & Y }$$	
	There exists a unique morphism$\xymatrix{g:X \ar[r] &  X\times_{_{Y}} X}$ in $\cx$ such that $\pi_{1} g = 1_X$, $\pi_{2} g = 1_X$. Thus $\pi_2$ and $\pi_1$ are quasi monos and split epis, and so by Lemma~\ref{l:quasi mono split epi is iso}(a) they are isomorphisms. Therefore $f$ is a mono.
	
	(b) Similar to (a).
\end{proof}

 \begin{remark}
	In the categories, $\mb{Set}$ of sets, $\mb{Top}$, of topological spaces and $R$-$\mb{Mod}$, of left $R$-modules, quasi monos (quasi epis) are monos (epis).
\end{remark}

 To give examples of quasi monos that are not monos, we need the following definitions.

 Recall that, \cite[p.72]{af},  a submodule $K$ of $M$ is

 (i) superfluous in $M$, abbreviated $K \ll M$, in case for every submodule $L \leq M$, $K + L = M$ implies $L = M$.

 (ii) is essential in $M$, abbreviated $K \trianglelefteq M$, in case for every submodule $L \leq M$, $K \cap L = 0$ implies $L = 0$. It is easy to see that $K$ is essential in $M$ if and only if, for every nonzero $m\in M$, there is $r \in R$ with $rm \in K$ and $rm \neq 0$.

 For a module $M$ the socle of $M$ ($= \text{Soc}(M)$) is defined as the sum of all simple (minimal) submodules of $M$, (see \cite[p.174]{w}).

 An $R$-module $M$ is called:

 (i) uniform if every non-zero submodule of $M$ is essential in $M$ (see \cite[19.9]{w}).

 (ii) quasi-projective in case $M$ is $M$-projective, (see \cite{fh}).

 (iii) cocyclic if there is an $m_0 \in M$ with the property: every morphism $\xymatrix{h:M \ar[r]& N}$with $m_0 \notin K_h$ is monic, where $K_h$ is the kernel of $h$, (see \cite[p.115]{w}).

 Let $R$ be a domain and let $M$ be an $R$-module. $M$ is called torsion-free, whenever there are no nonzero $x \in M$ and $r \in R$ with $r x = 0$. Note that projective modules are torsion free (see \cite[ Proposition 1.1]{ce}).

 \begin{example}\label{here}
	Let $\mathbb{QP}$ be the full subcategory of $R$-$\mb{Mod}$ whose objects are quasi-projective modules and let $R$ be a domain.
	
	 (a) Let $P$ be a uniform projective module. By \cite[21.1]{w}, $\text{Soc}(P)$ is the intersection of essential submodules of $P$ and since $P$ is uniform, every non-zero submodule of $P$ contains $\text{Soc}(P)$ (and hence it is minimum proper submodule of $P$). Consider the following commutative diagram
	$$\xymatrix{P \ar@{.{>}} [dr]_f \ar[r]^{\hs{-7}p} & P/\text{Soc}(P)\\
		& R^{(\Lambda)} \ar@{-{>>}} [u]_q}
	$$
	where $P/\text{Soc}(P)$ is homomorphic image of a free $R$-module $R^{(\Lambda)}$ for some set $\Lambda$, $p$ is the canonical projection and $q f = p$. Therefore $f$ is a non-zero map. We show that $f$ is quasi mono in $\mathbb{QP}$. To this end let the diagram,
	$$ \xymatrix{N \ar@{->}^{g} @<3pt> [rr]
		\ar@{->}_{h} @<-3pt> [rr] & & P \ar[rr]^{f} & & R^{(\Lambda)}}$$
	in $\mathbb{QP}$ be given such that $f g = f h$. So we have,
	
	\begin{equation}\label{eq 1}
	K_f + \text{Im}g = K_f + \text{Im}h
	\end{equation}
	If $\text{Im}h = 0$, then $\text{Im}g = 0$. Because otherwise, if $\text{Im}g \neq 0$, then $\text{Im}g \trianglelefteq P$. Since $f \neq 0$ there exists $0 \neq p_0 \in P$ such that $f(p_0) \neq 0$. This implies that there exists $r \in R$ such that $0 \neq r p_0 \in \text{Im}g$. Thus $r p_0 = g(t)$ for some $t \in N$ and hence $r f(p_0) = f(g(t)) = f(h(t)) = 0$. This is a contradiction, because $R^{(\Lambda)}$ is torsion-free. So $\text{Im}g = 0$. Thus we have two cases:
	
	(i) $\text{Im}g = \text{Im}h = 0$, and so $\la g \ra = \la h \ra = \{0\}$.
	
	(ii) $\text{Im}g \neq 0$ and $\text{Im}h \neq 0$. Thus equality (1) implies that $\text{Im}g = \text{Im}h$. Since $N$ is quasi-projective, there exist morphisms $\hs{-1}\xymatrix{\alpha , \beta : N \ar[r] & N}\hs{-1}$ such that the following diagram commutes
	$$\xymatrix{N \ar@ {-{>>}}[dr]_{\bar{g}} \ar@<1ex> @{.>} [rr]^{\hs{1} \alpha} & & N \ar@ {-{>>}}[dl]^{\bar{h}} \ar@<1ex> @{.>} [ll]^{\hs{1}\beta}\\
		& \text{Im}g = \text{Im}h & }$$
	where for each $x \in N$, $\bar{g}(x) = g(x)$ and $\bar{h}(x) = h(x)$. Therefore $\la g \ra = \la h \ra$ and hence $p$ is quasi mono.

(b) Let $P$ be a cocyclic projective module and $L$ be the intersection of all non-zero submodules of $P$, hence by \cite[14.8(c)]{w}, $L$ is a non-zero minimum submodule of $P$. Since every non-zero projective module contains a maximal submodule (see \cite[Proposition 17.14]{af}), $L$ is a proper submodule of $P$. Consider the following commutative diagram
$$\xymatrix{P \ar@{.{>}} [dr]_f \ar[r]^{\hs{-3}p} & P/L\\
	& R^{(\Lambda)} \ar@{-{>>}} [u]_q}
$$
where $P/L$ is homomorphic image of a free $R$-module $R^{(\Lambda)}$ for some set $\Lambda$, $p$ is the canonical projection and $q f = p$. As in (a) we can see that $f$ is a quasi mono in $\mathbb{QP}$.
\end{example}

Recall that, \cite[p.348]{w}, for a submodule $U$ of a left $R$-module $M$, a submodule $V \leq M$ is
called a supplement or addition complement of $U$ in $M$ if $V$ is a minimal
element in the set of submodules $L \leq M$ with $U + L = M$. A module is called supplemented if every submodule has supplements, (see \cite[p.349]{w}).

 A projective left (right) module over a ring $R$ will be called left hereditary in case every left submodule is projective, (see \cite{hi}).

 \begin{example}\label{supp}
		Let $M$ be a supplemented left hereditary module and $0 \neq K \ll M$. Let$\xymatrix{p: M \ar[r] & M/K}$ be the canonical projection. We show that $p$ is quasi mono in $R$-$\mb{Mod}$. To this end let the diagram,
		$$ \xymatrix{N \ar@{->}^{g} @<3pt> [rr]
			\ar@{->}_{h} @<-3pt> [rr] & & M \ar[rr]^{p} & & M/K}$$
		 such that $p g = p h$ be given. Thus $K + \text{Im}g = K + \text{Im}h$. Since $K + \text{Im}g$ has a supplement in $M$, by \cite[41.1(5)]{w} we have $K = K \cap (K + \text{Im}g) \ll (K + \text{Im}g)$. Therefore $\text{Im}h = K + \text{Im}g$ and hence $\text{Im}g \subseteq \text{Im}h$. Similarly $\text{Im}h \subseteq \text{Im}g$, so $\text{Im}g = \text{Im}h$. Since $M$ is hereditary, $\text{Im}g$ is a projective submodule of $M$. So there exist$\xymatrix{\iota_1: \text{Im}g \ar[r] & N}$and$\xymatrix{\iota_2: \text{Im}h \ar[r] & N}$ such that $g \iota_1 = 1_{\text{Im}g}$ and $h \iota_2 = 1_{\text{Im}h}$. Thus $g = h (\iota_2 g)$ and $h = g (\iota_1 h)$, hence $\la g \ra = \la h \ra$. Therefore $p$ is quasi mono.
		
\end{example}

\begin{example}\label{ex.m.1}
	Let $\mathbb{QP}$ be the category of quasi-projective left $R$-modules ($R$ need not be a domain).
	 Define the subcategory $\cc$ of $ \mathbb{QP} $ to have $\text{Obj}(\mathbb{QP})$ as objects and for all $ P,Q \in \cc$ a morphism $\hs{-1}\xymatrix{f:P \ar[r] & Q}\hs{-1}$ in $\cc$ is a homomorphism in $R$-$\mb{Mod}$ satisfying the condition below:
	\begin{align}
	&\forall K \leq P, \hs{1} f(K) \ll Q \Leftrightarrow K \ll P.\tag{C1}\label{mmeqmyy1}
	\end{align}
	Every isomorphism in $\mathbb{QP}$ satisfies in the condition (C1). Note that for each morphism $\hs{-1}\xymatrix{f:P \ar[r] & Q}\hs{-1}$ in $\cc$
	
	(i) if $\text{Im}f \ll Q$, then $P \ll P$ and hence $P = \{0\}$;
	
	(ii) $K_f \ll P$.
	
	Let $\hs{-1}\xymatrix{p: P \ar[r] & Q}\hs{-1}$ be a non-zero morphism in $\cc$ satisfying:
	\begin{subequations}
		\begin{align}
		&\forall A \leq P, K_p \lneqq A \Rightarrow K_p \ll A.\tag{C2}\label{mmeqmyy1}	
		\end{align}
	\end{subequations}
Thus for each morphism $\hs{-1}\xymatrix{g:L \ar[r] & P}$, $p g = 0$ implies that $L = \{0\}$. We show that $p$ is quasi mono. To this end let the diagram,
	\begin{align}
\xymatrix{L \ar@{->}^{g} @<3pt> [rr]
	\ar@{->}_{h} @<-3pt> [rr] & & P \ar[rr]^{p} & & Q} \label{meqmyy3}
	\end{align}
	in $\cc$ such that $p g = p h$ and $L \neq 0$ be given . So we have:
		\begin{align}\label{r3}
& K_p + \text{Im}g = K_p + \text{Im}h.
	\end{align}
Since $\text{Im}g \nleq K_p$ and $\text{Im}h \nleq K_p$, $K_p \lvertneqq K_p + \text{Im}g$ and hence by (C2) $K_p \ll K_p + \text{Im}g$. Thus equality (4) implies that $\text{Im}h = K_p + \text{Im}g$ and so $\text{Im}g \leq \text{Im}h$. Similarly $\text{Im}h \leq \text{Im}g$. Therefore $\text{Im}g = \text{Im}h$. Since $L$ is quasi-projective, there exist morphisms $\hs{-1}\xymatrix{\alpha , \beta : L \ar[r] & L}\hs{-1}$ such that the following diagram commutes
	$$\xymatrix{L \ar@ {-{>>}}[dr]_{\bar{g}} \ar@<1ex> @{.>} [rr]^{\hs{1} \alpha} & & L \ar@ {-{>>}}[dl]^{\bar{h}} \ar@<1ex> @{.>} [ll]^{\hs{1}\beta}\\
	& \text{Im}g = \text{Im}h & }$$
where for each $x \in L$, $\bar{g}(x) = g(x)$ and $\bar{h}(x) = h(x)$. It is easy to see that $\alpha$ and $\beta$ are morphisms in $\cc$ and $\la g \ra = \la h \ra$. Therefore $p$ is quasi mono in $\cc$.
\end{example}

 Recall that a semiperfect ring is one for which every finitely generated module has a projective cover, (see \cite[p.179]{r}). Semiperfect rings $T$, whose indecomposable, projective left and right
modules have simple and essential socles, are called $QF$-2 rings, (see \cite[p.557]{w}).

 \begin{example}\label{QF-2}
Let $R$ be a $QF$-2 ring and $P$ be a projective cover of a simple module. Since every simple module is trivially quasi-projective and indecomposable, $P$ is indecomposable, (see \cite[Exercises 17, p.203]{af}). Thus $P$ has a simple and essential socle and hence each non-zero submodule of $P$ contains $\text{Soc}(P)$. Furthermore $\text{Soc}(P)$ is a fully invariant submodule of $P$, (\cite[p.16]{t}), and so $P/\text{Soc}(P)$ is quasi-projective, (see \cite[Lemma 4.2]{rv}). Now consider the canonical projection $\xymatrix{p: P \ar@{-{>>}} [r] & P/\text{Soc}(P)}$. Let $K \leq P$, $p(K) \ll P/\text{Soc}(P)$ and $K + L = P$ for some $L \leq P$. Thus $p(K) + p(L) = P/\text{Soc}(P)$ and hence $p(L) = P/\text{Soc}(P)$. Therefore $p^{-1}p(L) = P$. Let $x \in p^{-1}p(L)$, so there exists $l \in L$ such that $p(x) = p(l)$. Thus $x - l \in K_p = \text{Soc}(P)$. Since $\text{Soc}(P) \leq L$, $x - l \in L$. This implies that $x \in L$ and so $p^{-1}p(L) = L$. Therefore $L = P$ and hence $K \ll P$. Since $p$ is an epimorphism and $K_p$ is simple and essential, it fulfills the conditions (C1) and (C2) of \Cref{ex.m.1} and hence $p$ is quasi mono in $\cc$.
\end{example}

 \begin{example}\label{yosif}
(1) (see \cite[p.36]{c} and \cite[Example 2.18]{yy}). Let $R = \mathbb{Z}_2[x_1, x_2, \cdots]$ where $x_i^3 = 0$ for all $i$, $x_i x_j = 0$ for all $i \neq j$ and $x_i^2 = x_j^2 = m \neq 0$ for all $i$ and $j$. Then $R$ is a commutative local ring and $R$ has a simple essential socle
$J(R)^2 = \mathbb{Z}_2 m$ as $R$-module. In particular, $R$ is uniform. Note that $\text{Soc}(R) \subseteq J(R)$. Thus $R$ has no non-zero semisimple direct summand, hence every simple $R$-submodule of $R$ is superfluous, (see \cite[Lemma 2.4]{ms}). Therefore $\text{Soc}(R) \ll R$. As \Cref{QF-2} the canonical projection$\xymatrix{p: R \ar@{-{>>}} [r] & R/\text{Soc}(R)}$ fulfills the conditions (C1) and (C2) of \Cref{ex.m.1} and hence $p$ is quasi mono in $\cc$.

 (2) In (1) if we replace $\mathbb{Z}_2$ by an ordered field, the results are still valid, (see \cite[Example 20]{ab}).

 (3) Since for all $n \in \mathbb{Z}$, $n\mathbb{Z}$ is a left $\mathbb{Z}$- right $\text{End}(_{\mathbb{Z}}\mathbb{Z})$ submodule of $_{\mathbb{Z}}\mathbb{Z}$, $\mathbb{Z}/n\mathbb{Z}$ is a quasi-projective $\mathbb{Z}$-module (see \cite[Exercises 16(3), p.203]{af}). Therefore $\mathbb{Z}_8$ is a quasi-projective $\mathbb{Z}$-module and $\text{Soc}(\mathbb{Z}_8) = \{\bar{0}, \bar{4}\}$. Thus $\text{Soc}(\mathbb{Z}_8)$ simple and essential $\mathbb{Z}$-submodule of $\mathbb{Z}_8$. Also, $\mathbb{Z}_8/\text{Soc}(\mathbb{Z}_8) \cong_{\mathbb{Z}} \mathbb{Z}_4$ and hence $\mathbb{Z}_8/\text{Soc}(\mathbb{Z}_8)$ is a quasi-projective $\mathbb{Z}$-module. As in \Cref{QF-2}, the canonical projection $\xymatrix{\mathbb{Z}_8 \ar@{-{>>}} [r]^{\hs{-8}p} & \mathbb{Z}_8/\text{Soc}(\mathbb{Z}_8)}$ fulfills the conditions (C1) and (C2) of \Cref{ex.m.1} and hence $p$ is quasi mono in $\cc$. Note that the lattice of submodules of $\mathbb{Z}_8$ is a completely ordered set by inclusion. This gives other examples of quasi mono.
\end{example}

\begin{example}\label{e:quasi monomo}
	Let $X$ and $X_0$ be two sets with $X_0 \varsubsetneq X$ and $\cc$ be a subcategory of $\mb{Set}$, such that $X_0 \in \cc$ and $X \notin \cc$.
	Now define the subcategory $\cd$ of $\mb{Set}$ to have $obj (\cc) \cup \{X\}$ as objects and for all $ A,B \in \cd$, $ Hom_{\cd}(A,B) =$
	
	$$\Small{ \begin{cases}
	Hom_{\cc}(A,B) & A,B \in \cc \cr Hom_{Set}(A,B) & A=X , B \in \cc \cr \emptyset & A \in \cc , B =X \cr \{ \xymatrix{X \ar[r]^{f} & X} \mid f(X_0)\su X_0, \xymatrix{X-X_0\hs1
		\ar@{>-{>>}}[rr]^{f\upharpoonright(X-X_0)} & & X-X_0} \} & A=X , B=X
	\end{cases}} $$
	where $\hs{-1}\xymatrix{X-X_0\hs1\ar@{>-{>>}}[rr]^{f\upharpoonright(X-X_0)} & & X-X_0}\hs{-1}$ is the restriction of $f$ to $X-X_0$ and is assumed to be a bijection to $X-X_0$. Let $\hs{-1}\xymatrix{X_0 \ar@{^{(}->}[r]^{j} & X}\hs{-1}$ be an inclusion map and so there exists $\hs{-1}\xymatrix{X \ar@{-{>>}}[r]^{\hs{-2}r} & X_0}\hs{-1}$ such that $r j = 1_{X_0}$. Then $r$ is a quasi mono in $\cd$. To prove this suppose the diagram,
	$$ \xymatrix{X \ar@{->}^{f} @<3pt> [rr]
		\ar@{->}_{g} @<-3pt> [rr] & & X \ar[r]^{r} & X_0}$$
	is given in $\cd$ such that $r f = r g$. Define $\hs{-1}\xymatrix{h: X \ar[r] & X}\hs{-1}$ by
	$$h(x)
	:=\begin{cases}
	x & x\in X_0 \\
	t & x\in X-X_0 \text{ and } g\upharpoonright(X-X_0)(t) = f\upharpoonright(X-X_0)(x).
	\end{cases}$$
	It is easy to see that $f = g h$. Thus $f \in \la g \ra$. Similarly $g \in \la f \ra$ and hence $\la f \ra = \la g \ra$.
\end{example}

 In the following examples we give a quasi right $\cm$-factorization structure in which each $m$ in $\cm$ is a quasi mono.

\begin{example}\label{qpart}
Consider the category $R$-$\mb{Mod}$, where $R$ is a semisimple ring. Now define the
class $\cm$ as follows:
$$\cm = \{m \mid m \hbox{ is mono or superfluous epi}\}.$$
Then $R$-$\mb{Mod}$ has quasi right $\cm$-factorizations.

Now let $M$ be a supplemented module and $0 \neq K \ll M$. Let $\xymatrix{p: M \ar[r] & M/K}$be the canonical projection. Consider the following commutative diagram
$$\xymatrix{R^{(\Lambda)} \ar@{.{>>}} [dr]_{f} \ar@{-{>>}}[r]^{\hs{-3}q} & M/K\\
	& M \ar@{-{>>}} [u]_p}
$$
where $M/K$ is the homomorphic image of a free $R$-module $R^{(\Lambda)}$ for some set $\Lambda$, $p$ is the canonical projection and $q = p f$. Since $pf$ is an epi and $p$ is a superfluous epi, then $f$ is epi, (see \cite[Corollary 5.15.]{af}). As we have shown in \Cref{supp}, $\xymatrix{M \ar@{-{>>}}[r]^{\hs{-3}p} & M/K}$ is a quasi mono. Note that since $R$ is semisimple, $M$ is hereditary.	Now consider the following unbroken commutative diagram in $R$-$\mb{Mod}$, where $n \in \cm $.
$$\xymatrix{R^{(\Lambda)} \ar[r]^{u} \ar@{-{>>}}[d]_{f} & L \ar[d]^{n}\\
M \ar@{.>}[ur]^{d} \ar@{-{>>}}[r]_{\hs{-2}p} \ar @{} [ur] |{ } & M/K} $$
where $p f = nu$ and so $n$ is epi. Since $M$ is projective, there exists a morphism $d$ such that $n d = p$. Thus the factorization $q = p f$ is a quasi right $\cm$-factorization and $p$ is a quasi mono.
\end{example}

 \begin{example}
Recall that a ring $R$ is completely hereditary if its class of quasi-projective
modules is closed under taking submodules, (see \cite{fu}). Let $\cc$ be as in Example~\ref{ex.m.1} and
$$\cm = \{m \in \cc \mid m \hbox{ is mono or split epi in $\cc$}\}.$$
Let$\xymatrix{f: P \ar[r] & Q}$ be a morphism in $\mathbb{QP}$. Since $P/K_f$ is isomorphic to a submodule of $Q$, $P/K_f$ is quasi-projective. Let$\xymatrix{p: P \ar[r] & Q}$ be a morphism in $\cc$. Consider the factorization
$$\xymatrix{P \ar[r]^{\hs{-5}p} & Q = P \ar@{-{>>}} [r]^{\bar{p}} & P/K_p\hs{1} \ar@{{>}->}[r]^{\hs{3}q} & Q}$$
in $R$-$\mb{Mod}$, where $\bar{p}$ is the canonical projection and for each $x \in P$, $q(x+K_p) = p(x)$. By \cite[Proposition 5.17.]{af}, the morphisms $\bar{p}$ and $q$ are in $\cc$ and satisfy the condition (C2) of the \Cref{ex.m.1}. Thus the factorization $p = q \bar{p}$ is a quasi right $\cm$-factorization of $p$. 

Now let $R$ also be left artinian ring and $R/J$ be projective $R$-module, where $J = J(R)$ is the Jacobson radical of $R$. Thus there is a complete set of pairwise orthogonal primitive idempotents $e_1, \dots ,e_n$ such that $R = Re_1 \oplus \cdots \oplus Re_n$ and each $Re_i/Je_i$ is simple and$\xymatrix{Re_i \ar[r]^{\hs{-5}p_i} & Re_i/Je_i}$ is a projective cover, (see \cite[Exercises 20, p.203]{af}). Therefore for each $i$, $Je_i$ is a superfluous maximal submodule of $Re_i$. Thus we have the following commutative diagram.
\begin{equation*}
\xymatrix{R \ar@{.{>}} [dr]_{f_i} \ar@{-{>>}}[r]^{\hs{-7}q} & Re_i/Je_i\\
	& Re_i \ar@{-{>>}} [u]_{p_i}}
\end{equation*}
where for each $r \in R$, $q(r) = re_i + Je_i$. By \cite[Exercises 21.17(3), p.183]{w}, for each $i$, $Je_i = \text{Rad}(Re_i)$ and by \cite[21.6(2)]{w}, it is a fully invariant submodule of $Re_i$. So $Re_i/Je_i$ is a quasi-projective module. Since $q$ is epi and $p_i$ is a superfluous epi, $f_i$ is epi, (see \cite[Corollary 5.15.]{af}). Note that \cite[Proposition 5.17.]{af} implies that $q$ is a morphism in $\cc$. Now we show that for each $i$, $f_i$ is a morphism in $\cc$. Let $N \ll R$. Since $f_i$ is an epimorphism, $f_i(N) \ll Re_i$. Now, if $f_i(N) \ll Re_i$, then $q(N) = p_i(f_i(N)) \ll Re_i/Je_i$ and so $N \ll R$. So the above diagram is in $\cc$. Also for each $i$, since $Je_i$ is superfluous and maximal submodule of $Re_i$, $p_i$ satisfies the condition (C2) and hence is a quasi mono. Since $R/J \cong Re_1/Je_1 \oplus \cdots Re_n/Je_n$ and $R/J$ is a projective $R$-module, for each $i$, $Re_i/Je_i$ is a projective $R$-module and hence $p_i$ is split epi. Thus there exists$\xymatrix{s_i: Re_i /Je_i \ar[r] & Re_i}$such that $p_i s_i = 1_{Re_i/Je_i}$. Now we show that $s_i$ is a morphism in $\cc$. Let $L/Je_i \ll Re_i/Je_i$ and $s_i(L/Je_i) + K = Re_i$. Thus $L/Je_i + p_i(K) = Re_i/Je_i$ and so $(K + Je_i)/Je_i = p_i(K) = Re_i/Je_i$. Therefore $K + Je_i = Re_i$ and hence $K = Re_i$. This implies that $s_i(L/Je_i) \ll Re_i$ and hence $s_i$ is a morphism in $\cc$. Therefore for each $i$, $p_i\in \cm$. Similar to \Cref{qpart}, the factorization $q = p_i f_i$ is a quasi right $\cm$-factorization.
\end{example}

Let $\cc$ be a class of morphisms in a category $\cx$ which is closed under composition. We write $f \leq_{\cc} g$, whenever there exists an $h \in \cc$ such that $f = gh$; and we write  $f \sim_{\cc} g$, whenever $f \leq_{\cc} g$ and $g \leq_{\cc} f$.

\begin{example} \label{e:construct quasi M}
	Suppose $\cc$ is a category and $\cm$ is a class of quasi monos in $\cc$ such that it is a quasi right factorization in $\cc$. Let $\cx$ be a category such that $ \cc \subseteq \cx$ and there exists $ X \in obj(\cx) - obj(\cc)$ and for all $A \in \cc$ and $\hs{-1}\xymatrix{g,h: X \ar[r] & A }\hs{-1}$ there exists an isomorphism $\hs{-1}\xymatrix{k: X \ar[r] &X}\hs{-1}$ in $\cx$ such that $ g = hk$.
	Now define the subcategory $\cd$ of the category $\cx$  to have $ obj(\cc) \cup \{X\}$ as objects and for all $ A, B \in \cd$,
	$${\tiny \text{Hom}_{\cd }(A,B) = \begin{cases}
		\text{Hom}_{\cc }(A,B) & A, B  \in \cc  \cr  \text{Hom}_{\cx}(A,B) &  A = X, B\in \cc \cr \emptyset & A  \in \cc , B= X \cr  \{\hs{-1} \xymatrix{X \ar[r]^f & X}\hs{-2}\mid f \hbox{ is an isomorphism in } \cx \} & A=X , B= X
		\end{cases}} $$
	First we show that the class $ \cn = \cm \cup \{\hs{-1} \xymatrix{X \ar[r]^f & A }\hs{-2} : A \in \cd \}$ is a class of quasi monos in $\cd$.
	Suppose that $\hs{-1} \xymatrix{B \ar[r]^f & C }\hs{-1}$ in $ \cn$ and $\hs{-1}\xymatrix{g,h: A \ar[r] & B }\hs{-1}$ in $ \cd$ are given such that $ fg = fh $. 	We have two cases:
	
	$(i)$ Suppose $ f \in \cm$. If $ A \in \cc$, then  $g \sim_{\cc} h$. So there exist morphisms $\hs{-1}\xymatrix{a,a': A \ar[r] & A }\hs{-1}$ in $\cc$ such that $ g = ha$ and $ h = g a'$. Since $a,a'$ are in $\cd$,  $g \sim_{\cd} h$. If $ A=X$, then there exists an isomorphism $\hs{-1} \xymatrix{k:X \ar[r]  & X}\hs{-1}$ in $\cx$ such that $ g = hk$. It follows that $k \in \cd$, thus $g \sim_{\cd} h$.
	
	$(ii)$ If $ B=X$, then $A=X$ and $g$ and $h$ are isomorphisms. Hence, $g \sim_{\cd} h$.
	
	Next we show that $\cd$ has quasi right $\cn$-structure. To this end,  let  $\hs{-1} \xymatrix{f : A \ar[r]  & B}\hs{-1} $ be a morphism in $ \cd$.	We have two cases:
	
	$(i)$ If $ A \in \cc$, then $f$ is a morphism in $\cc$ and since $\cc$ has quasi right $\cm$-factorizations, there exist morphisms $\hs{-1}\xymatrix{m_f : M \ar[r] & B} \hs{-1} $ and $\hs{-1}\xymatrix{e_f : A \ar[r] & M}\hs{-1}  $  in $ \cc$ such that $ f = m_f e_f$ and $ m_f \in \cm$. Since $\cm \subset \cn$, $ f = m_f e_f$ is a factorization in $\cd$ as well.
	
	$(ii)$ If $ A=X$, then set $ m=f$ and $ e =1_A$ so that $ f= me$ is a factorization for $f$ in $\cd$.
	
	Now consider the following unbroken commutative diagram in $\cd$ such that $n \in \cn$.
	
	$$\xymatrix{A \ar[r]^{u} \ar[d]_{e } &  N \ar[d]^{n}\\
		M \ar@{.>}[ur]^{d} \ar[r]_{\hs{2} m }\ar @{} [ur] |{\hs{8}///} & B} $$
	We have two cases:
	
	$(i)$ If $A \in \cc$, then the above diagram commutes in $\cc$ and $ e = e_f , m=m_f$ and $n \in \cm$. So there exists a morphism  $\hs{-1}\xymatrix{d : M \ar[r] & N}\hs{-1}  $ in $\cc$ such that $m=nd$. Hence $d$ is a morphism in $\cd$ and $m \leq_{\cd} n$.
	
	$(ii)$ If $A=X$, then $e=1_A$ and $ m=f$. Set $d=u$, so $m=nu$ and  $m \leq_{\cd} n$.
	
	Hence, $\cd$ has quasi right $\cn$- factorization structure.
\end{example}

In the following two examples we give a class of quasi epis which are not epis.

\begin{example}
	Let $\cc$ be the category of subrings of real numbers $\mathbb{R}$. Then the inclusion $ \xymatrix{\mathbb{Z} \ar@{^{(}->}[r]^{\hs{-4}j} & \mathbb{Q}(\sqrt{p})}$, where $p$ is a prime number, is  quasi epi. To show this, suppose the diagram,
	$$ \xymatrix{\mathbb{Z} \ar@{^{(}->}[rr]^{\hs{-5}j} & &
		\mathbb{Q}(\sqrt{p}) \ar@{->}^{f} @<3pt> [rr]
		\ar@{->}_{g} @<-3pt> [rr] & & S}$$
	is given in $\cc$ such that $f j = g j$. Thus for each $m \in \mathbb{Z}$, $f(m) = g(m)$ and thus for each $a \in \mathbb{Q}$, $f(a) = g(a)$. On the other hand $f(\sqrt{p}) = g(\sqrt{p})$ or $f(\sqrt{p}) = - g(\sqrt{p})$. In the first case $f = g$. In the second case define $\hs{-1}\xymatrix{h:\mathbb{Q}(\sqrt{p}) \ar[r] & \mathbb{Q}(\sqrt{p})}\hs{-1}$ by $ h(a + b \sqrt{p}) = a - b\sqrt{p}$, implying $f = g h$ and $g = f h$.
\end{example}

 \begin{example}\label{e:quasi epimo}
	Let $X$ and $X_0$ be in $\mb{Set}$ with $X_0 \varsubsetneq X$ and $\cc$ be a subcategory of $\mb{Set}$, such that $X-X_0 \in \cc$ and $X \notin \cc$. Now define the subcategory $\cd$ of $\mb{Set}$ to have $obj (\cc) \cup \{X\}$ as objects and $ Hom_{\cd}(A,B) =$
	
	$$\begin{cases}
	Hom_{\cc}(A,B) & A,B \in \cc \cr Hom_{Set}(A,B) & A \in \cc , B=X \cr
	\emptyset & A=X, B\in \cc \cr
	\{ \xymatrix{X \ar[r]^f & X} \mid \hs{-1}\xymatrix{X_0 \hs1 \ar@{>-{>>}}[rr]^{f\upharpoonright{X_0}} & & X_0} \} & A=X, B=X
	\end{cases} $$
	Then the inclusion map $\hs{-1}\xymatrix{X-X_0 \ar@{^{(}->}[r]^{\hs{4}j} & X}\hs{-1}$ is a quasi epi in $\cd$.
\end{example}

 \section{General closure operator}
Consider a category $\cx$ and a fixed class $\cm$ of morphisms (not necessarily monomorphisms) in $\cx$ which we think of as generalized subobjects. For every object $X$ of $\cx$, let $\mc{M} / X$ be the class of all morphisms in $\cm$ with codomain $X$. The relation given by
$$m \leqslant n \Leftrightarrow \la m \ra \su \la n \ra$$
is reflexive and transitive, hence $\mc{M} / X$ is a preordered class. Note that $m \leqslant n$ means that there exists a morphism $j$ such that $m = n j$. Since $\cm$ is an arbitrary class of morphisms, $j$ is not uniquely determined. Also $m \leqslant n$ and $n \leqslant m$ do not imply that $j$ is an isomorphism. If $m \leqslant n$ and $n \leqslant m$, then we say $m$ and $n$ are $\la \hs{1} \ra$-equal and we write $m \sim n$. Of course, $\sim$ is an equivalence relation, and $\mc{M}/X$ modulo $\sim$ is a partially ordered class. In fact, we shall use the notations $\leqslant$ and $\sim$ for elements of $\mc{M}/ X$ rather than for their $\sim$-equivalence classes. So, with $\underline{m}$ denoting the $\sim$-class of $m$, we have
$$m \leqslant n \Leftrightarrow \underline{m} \leqslant \underline{n}$$
$$m \sim n \Leftrightarrow \underline{m} = \underline{n}.$$

 \begin{definition}\label{def.cl} Suppose that $\cx$ has quasi right $\cm$-factorizations. A {\it general closure operator} $\mb C$ on $\mc X$ with respect to $\mc M$ is given by $\mb C = (\mb {c}_{X})_{X\in \mc X}$, where $\mb{c}_{X}:\hs{-2}\xymatrix{ \mc {M}/X \ar[r] & \mc {M}/X }\hs{-1}$ is a map satisfying:
	
	(a) the extension property: for all $m\in \mc{M}/X$, $m\leqslant \mb{c}_{X}(m)$;
	
	(b) the monotonicity property: for all $m, m'$ in $\mc{M}/X$ whenever $m\leqslant m'$ in $\mc{M}/X$, then $\mb{c}_{X}(m)\leqslant \mb{c}_{X}(m')$ ;
	
	(c) the continuity property: for every morphism $\hs{-1}\xymatrix{f: X\ar [r] & Y}\hs{-1}$ in $\mc X$ and for all $m\in \mc{M}/X$, $m_{f\mb{c}_{X}(m)} \leqslant \mb{c}_{Y}(m_{fm})$.
\end{definition}

 Since $\mc{M}$ is an arbitrary class of morphisms and $\cx$ has quasi right $\cm$-factorizations, the Definition~\ref{def.cl} is a generalization of the closure operator that is defined in~\cite{dt}.

 \begin{remark}
	Suppose that $\cm$ has $\cx$-pullbacks (i.e. $m\in \cm$ and $f\in\cx $ with the same codomains, implies the pullback, $f^{-1}(m)$, of $m$ along
	$f$ is in $\cm$) and $\cx$ has quasi right $\cm$-factorizations. For every object $X$ in $\cx$ consider the preordered class $\mc{M} / X$. We have the adjunction,
	\begin{equation}\label{eq:adjunction}
	\xymatrix{\mc{M}/X \ar@<1ex> [rr]^{\hs{4} f(-)}_{\hs{4}\perp} & & \mc{M}/X \ar@<1ex> [ll]^{\hs{4}f^{-1}(-)}
	}
	\end{equation}
	see~\cite{mh11}. In the presence of (b), by adjunction~(\ref{eq:adjunction}), the continuity condition can equivalently be expressed as:
	
	(c$'$) for every morphisms $\hs{-1}\xymatrix{f: X\ar [r] & Y}\hs{-1}$ in $\mc X$ and for all $m\in \mc{M}/Y$, $$\mb{c}_{X}(f^{-1}(m))\leqslant f^{-1}(\mb{c}_{Y}(m)).$$
\end{remark}

 \begin{example}\label{examp}
	(1)
	Let $\cx$ be a pointed category with finite products, and $\ck$ be a
	non-empty class of objects of $\cx$ such that for any pair of isomorphic
	objects either both are in $\ck$ or both are not; and let $\cm$ be the
	class of all spilt epis with kernel in $\ck$. Then $\cm$ is a
	quasi right factorization structure in $\cx$ which is closed under
	composites with isomorphisms on both sides. A morphism $\hs{-1}\xymatrix{f:
	X \ar[r] & Y}\hs{-1}$ in $\cx $ can be factored as $f = \pi_2 \la 0,f \ra$,
	where$\xymatrix{\la 0,f \ra: X \ar[r] & K \times Y}$for some $K \in \ck$.
	It is easy to see that $ \pi_{2}$ is a spilt epi with kernel in $\ck$.
	Since $\cm$ is a collection of split epis, any family $\mb{C} = (\mb{c}_{X})_{X
	 \in \cx}$ where $ \mb{c}_{X}$ is a map from $\mc{M}/X$ to itself, forms a
	 general closure operator on $\cx$ with respect to $\cm$.
	
	Examples (2), (3) and (4) below are special cases of this type.

 	(2) Consider the category $_{R}M_{S}$ of $(R,S)$-bimodules, where $R$ and $S$ are commutative rings and suppose that there exists a ring homomorphism $\hs{-1}\xymatrix{\sigma: R \ar[r] & S}\hs{-1}$ such that $\sigma(1_R) = 1_S$. Thus $S$ is an $R$-module by $r \cdot s = \sigma(r) s$ and hence $S \in{ _{R}}M_{S}$. Suppose that $\cc$ is a full subcategory of $_{R}M_{S}$ whose objects are $(R,S)$-bimodules $M$ such that for each $r\in R$, $s\in S$ and $m \in M$ we have $s(rm) = (s \cdot r)m$. Let $\cm $ be the class of all split epis in $\cc$. One can easily verify that $\cm$ is a quasi right factorization structure in $\cc$. A morphism $\hs{-1}\xymatrix{f: X \ar[r] & Y}\hs{-1}$ in $\cc$ can be factored as follows:
	$$\xymatrix{ X \ar[dr]_{\la 0,f \ra} \ar@{} [drr] |{///}
			\ar[rr]^{f} & & Y \\
			& X \oplus Y \ar[ur]_{\pi_{2}} & }$$
	For each morphism $\hs{-1}\xymatrix{\varphi : M \ar@{>>} [r] & X}\hs{-1}$ in $\cm$, define its closure $ \bar{\varphi}$ be the unique map making the following diagram commute,
	$$\xymatrix{S \times M \ar[rr]^{\otimes_R} \ar[dr]_\psi \ar @{} [drr] |{\hs{-4}///} & & S \otimes_R M \ar[dl]^{\bar{\varphi}}\\
			& X &}$$
	where the map $\hs{-1}\xymatrix{\psi: S\times M \ar[r] & X}\hs{-1}$ takes $ (s,m)$ to $ s \varphi(m)$.
	
	(3) Let $\cc$ be the {\it category of torsion free modules},
	\cite{ej}, and $\cm$ be the class of all split epis. Then $\cm$ is a
	quasi right factorization structure in $\cc$, \cite{mh11}. Suppose that
	$\hs{-1}\xymatrix{m: M \ar@{>>} [r] & X}\hs{-1}$ is a morphism in $\cm$.
	There is a torsion free precover $\hs{-1}\xymatrix{\varphi : T \ar[r] &
	X}\hs{-1}$. Since $M$ is torsion free, there is a map
	$\hs{-1}\xymatrix{\psi : M \ar[r] & T}\hs{-1}$ such that $\varphi \psi =
	m$.
	Now define the closure of $m$ to be the map $\varphi$.

 (4) Let $ \cc $ be {\it an abelian category with enough
injectives}, \cite{f}. The collection $\cm$ of all epis whose kernels are
injective is a quasi right factorization structure. A morphism
$\hs{-1}\xymatrix{f: X \ar[r] & Y}\hs{-1}$ can be factored as $\pi_2 \la i,f
\ra$, where $\hs{-1}\xymatrix{i: X \ar[r] & E}\hs{-1}$ is the mono from $X$ to
an injective object $E$ and $\hs{-1}\xymatrix{\pi_2: E\times Y \ar[r] &
Y}\hs{-1}$ is the projection to the second factor. Now for each morphism
$\hs{-1}\xymatrix{m: M \ar@{>>} [r] & X}\hs{-1}$ define its closure to be the
map $ m\pi_{2} : K\oplus M \twoheadrightarrow X$, where $ K = \text{Ker}(m)$.

(5) Let $ \cc $ be a {\it closed model category}, \cite{q}. The
collection $\cm$ of fibrations in $\c$ is a quasi right factorization structure. For
each object $ X \in \cc $ we have a trivial fibration $\hs{-1}\xymatrix{p_{X}:
Q(X) \ar[r] & X }\hs{-1}$ with $ Q(X)$ cofibrant.
	Now for each morphism $\hs{-1}\xymatrix{m: M \ar[r] & X}\hs{-1} $ in $\cm$ define its closure to be the map $\hs{-1}\xymatrix{m p_{m}: Q(M) \ar[r] & X}\hs{-1}$.
	
(6) As a special case of (5), in the category
{\it $\mb {Top}$}, of topological spaces and continuous maps, the collection
$\cm$ of Serre fibrations is a quasi right factorization structure. Now the
closure of a morphism $\hs{-1}\xymatrix{m: M \ar[r] & X} \hs{-1}$ in $\cm$ is
as in (5).

 (7) Let $ \cc $ be a {\it model category}. For the {\it category
of fibrant objects}, $\cc_{f} $, the collection $\cm$ of fibrations is a
quasi right factorization structure. Define the closure of $\hs{-1}\xymatrix{m:
M \ar[r] & X}\hs{-1}$ in $\cm$ to be the projection to the first factor,
	$\hs{-1}\xymatrix{\pi_{1}: X \times M \ar[r] & X}\hs{-1}$.

 (8) As a special case of (7), in the category
{\it $\mb {Top}$}, in which all the objects are fibrant, the collection
$\cm$ of Serre fibrations is a quasi right factorization structure. Define the
closure of $\hs{-1}\xymatrix{m: M \ar[r] & X} \hs{-1}$ in $\cm$ to be the
projection to the first factor,
	$\hs{-1}\xymatrix{\pi_{1}: X \times M \ar[r] & X}\hs{-1} $.

 (9) \label{exm.8}In the {\it cofibrant category ($\mb{Top}$, cofibrations,
homotopy equivalences)}, the collection $\cm$ of homotopy equivalences is a
quasi right factorization structure. A morphism $f$ can be factored
as$$\xymatrix{X \ar[r]^{\hs{-4} f} & Y = X \ar[r]^{\hs{4}i_f} & Z_f \ar[r]^{r_f}
& Y,}$$where $r_f$ is a homotopy equivalence, $i_f$ is a cofibration and $Z_f$
is the mapping cylinder of $f$, \cite{ksvw}. For each morphism
$\hs{-1}\xymatrix{M
\ar[r]^{m} & X}\hs{-1} $ in $\cm$ define its closure to be the map,
$\hs{-1}\xymatrix{Z_m \ar[r]^{r_m} & X}\hs{-1} $.

 (10) In the {\it fibrant category ($\mb{Top}$, fibrations,
homotopy equivalences)}, the collection $\cm$ of fibrations is a quasi right
factorization structure. A morphism $f$ can be factored
as$\xymatrix{X \ar[r]^{\hs{-4} f} & Y = X \ar[r]^{\hs{4}q_f} & P_f \ar[r]^{k_f}
	& Y,}$where $q_f$ is a homotopy equivalence, $k_f$ is a fibration and $P_f$
	is the mapping path space of $f$, \cite{ksvw}. For each morphism
	$\hs{-1}\xymatrix{m: M \ar[r] &
X}\hs{-1} $ in $\cm$ define its closure to be the map, $\hs{-1}\xymatrix{k_m:
P_m \ar[r] & X}\hs{-1}$.

 (11) In the {\it Kleisli category
$\mb{Set_\mathbb{P}}$}, where $P$ is the power set monad
$\mathbb{P}=(P,\eta,\mu)$,
	for each morphism $\hs{-1}\xymatrix{\hat{f}: X \ar[r] & Y}\hs{-1}$ in
	$Set_\mathbb{P}$, let $\hs{-1}\xymatrix{f: X \ar[r] & P(Y)}\hs{-1}$ be its
	associated morphism in $\mb{Set}$ and$$\xymatrix{X \ar[r]^{\hs{-6}f} & P(Y) = X
	\ar[r]^{\hs{7}f'} & I_{f} \ar[r]^{\hs{-3}m_{f}} & P(Y)}$$be the $(Epi,
	Mono)$ factorization of $f$.
	The class $\cm=\{\widehat{m_f}: \hat{f}\in Set_\mathbb{P}\}$ is a quasi
	right factorization structure, see \cite{mh11}. For each morphism
	$\hs{-1}\xymatrix{\widehat{m_f}: I_{f} \ar[r] & Y}\hs{-1} $ in $\cm$ define
	its closure to be the map $$\xymatrix{\widehat{\mu_{Y} P(m_{f})} : P(I_{f})
	\ar[r] & Y}\hs{-1}. $$

 (12) Let $\cx$ be a category with binary coproducts. Then the
class $\cm=\{\ [f,f] : f \hbox{ is a morphism in } \cx \}$ is a quasi right
factorization structure. A morphism$\xymatrix{X \ar[r]^f & Y}\hs{-1}$
can be factored as $f=[f,f]\nu_1$, where $\nu_1$ is the injection of the
coproduct. For each morphism $ m$ in $\cm$ define its closure to
be the map $[m,m]$.

 (13)
	Given any adjunction
	$\xymatrix{\cx \ar@<1ex> [rr]^{\hs{1} F}_{\hs{1}\perp} & & \ca \ar@<1ex> [ll]^{\hs{1}G}
	}$.	Let $\tilde{ \ca} $ be the full subcategory of $ \ca$ consisting of
	those objects $A$ such that$\xymatrix{\epsilon_{A} : F G(A)\ar[r] & A}$ the
	component of the counit (of the above adjunction) at $A$, is a split epi.
	The class $\cm$ consisting of those split epis in $\ca$ whose
	domain is $F(X)$ for some object $X$ in $\cx$ is a quasi right
	factorization structure on $\tilde{ \ca} $. Each morphism $f$ in $\tilde{
	\ca} $ factorizes as$$\xymatrix{A \ar[r]^{\hs{-4} f} & B = A
	\ar[r]^{\hs{-2}sf} & FG(B) \ar[r]^{\hs{4}\epsilon_B}& B,}$$where $s$ is any
	splitting of $\epsilon_B$. If in addition $\cx$ has binary
	coproducts, for each morphism $\hs{-1}\xymatrix{m: F(X) \ar[r] & A}\hs{-1}$
	in $\cm$ define its closure to be the unique morphism
	$\hs{-1}\xymatrix{c_{X}(m): F(X + X) \ar[r] & A}\hs{-1}$, corresponding by
	adjunction to $\hs{-1}\xymatrix{X + X \ar[r]^{[\bar{m},\bar{m}]} &
	G(A)}$, where $\hs{-1}\xymatrix{X \ar[r]^{\hs{-4}\bar{m}} & G(A)}\hs{-1}$ is
	the morphism corresponding by adjunction to $m$. Now we show that $c_{X}(m)$ is a split epi. Let
	$$\xymatrix{\text{Hom}_{\ca}(F-, -) \ar[r]^{\hs{-10}\theta_{-,-}} & \text{Hom}_{\cx}(- , G-): \cx^{\text{op}} \times \ca \ar[r] & \text{\bf{Set}}}$$
be the natural isomorphism corresponding to the adjunction $F \dashv G$. For $\hs{-1}\xymatrix{X \ar[r]^{\hs{-3}\bar{m}} & G(A)}\hs{-1}$ we have

 $\tiny{\xymatrix{
		X \ar [d]_{\bar{m}} \\G(A), }}$
$\tiny{\xymatrix{\text{Hom}_{\ca}(FG(A), A) \ar @{}[dr] |{_{///}} \ar[d] \ar[r]^{\theta_{GA,A}}
		& \text{Hom}_{\cx}(G(A), G(A)) \ar[d] & \\
		\text{Hom}_{\ca}(F(X), A) \ar[r]^{\theta_{X,A}} & \text{Hom}_{\cx}(X, G(A))
		\ar @{}[ur]| {\quad\quad\quad\quad_{\longrightarrow}}
}}$
$\tiny{\xymatrix{ \epsilon_{A} \ar@{|-{>}}[d] \ar@{|-{>}}[r]
		& 1_{G(A)} \ar@{|-{>}}[d] \\
		\epsilon_{A} F(\bar{m}) \ar@{|-{>}}[r] & \bar{m}
}}$

 \noindent
Since $\theta_{X,A}(m) = \bar{m}$ and $\theta_{X,A}$ is one-to-one, $m = \epsilon_{A} F(\bar{m})$. Suppose that $\xymatrix{\iota_1, \iota_2:X \ar[r] & X+X}$are the canonical injections of the coproduct $X+X$. Since left adjoint preserves coproducts, $F([\bar{m},\bar{m}])$ is the unique morphism such that $F([\bar{m},\bar{m}]) F(\iota_1) = F([\bar{m},\bar{m}]) F(\iota_2) = F(\bar{m})$. Thus $m = \epsilon_{A} F(\bar{m}) = \epsilon_{A} F([\bar{m},\bar{m}]) F(\iota_1) = c_{X}(m) F(\iota_1)$ and hence $c_{X}(m)$ is split epi, because $m$ is split epi.

 (14) As a special case of (13),
consider $\mb {Proj}(R$-$\mb{Mod})$ as the full subcategory of the category
$R$-$\mb{Mod}$, consisting of all projective $R$-modules. The collection $\cm$
of all epis with free domains is a quasi right factorization structure, see
\cite{mh11}. For each
morphism $\hs{-1}\xymatrix{m: F \ar[r] & P}\hs{-1}$ in $\cm$ define its closure
to be the map $\xymatrix{[ m,m ]: F\oplus F \ar[r] & P}$.
	
\end{example}

 Now on instead of saying $\mb C$ is a general closure operator on the category $\mc X$ with respect to $\mc M$ we will say $\mb C$ is a closure operator.

 \begin{lemma} (Weak Diagonalization Lemma)\label{wdl}
	Let $\cx$ has quasi right $\cm$-factorizations and $\mb C$ be a closure operator. For every commutative diagram
	$$\xymatrix{M \ar[r]^{u} \ar[d]_{m} & N \ar[d]^{n}\\
		X \ar[r]_{v} & Y} $$
	with $m, n \in \cm$, there is a morphism $w$ rendering the lower square in the diagram
	$$\xymatrix{ M \ar[d]_{j_m} \ar[rr]^{u} & & N \ar[d]^{j_n} \\
		\mb{c}_X(M) \ar@{} [drr] |{///} \ar[d]_{\mb{c}_X(m)} \ar@ {.>}[rr]^w & & \mb{c}_Y(N) \ar[d]^{\mb{c}_Y(n)} \\
		X \ar[rr]_v & & Y}$$
	commutative.
\end{lemma}

 \begin{proof}
	Let $\xymatrix{\mb{c}_X(M) \ar[rr]^{\hs{-5}v\mb{c}_X(m)} & & Y = \mb{c}_X(M) \ar[r]^{\hs{5}e} & M' \ar[rr]^{m_{v \mb{c}_X(m)}} & & Y,}$ be the $\cm$-quasi right factorization of $v\mb{c}_X(m)$. Since $vm \leq n$, by \Cref{qrf}(b$'$), $m_{vm} \leq n$, hence $m_{v \mb{c}_X(m)} \leq \mb{c}_{Y}(m_{v m}) \leq \mb{c}_Y(n)$. Thus there exists $\xymatrix{w':M' \ar[r] & \mb{c}_Y(N)}$such that $\mb{c}_{Y}(n) w' = m_{v \mb{c}_X(m)}$. Put $w = w' e$, so $\mb{c}_Y(n) w = v \mb{c}_X(m)$.
\end{proof}

 \begin{definition} Suppose that $\mc{M}$ is a class of morphisms in $\cx$ and $\cx$ has quasi right $\cm$-factorizations. Also suppose that $\mb C$ is a closure operator and $m\in \mc{M}\slash X$, where $X$ is an object in $\cx$. We say $m$ is
	
	(a) {\it quasi $\mb C$-closed} in $X$, if $\mb{c}_X(m) \sim m$ (see \cite{mh11});
	
	(b) {\it quasi $\mb C$-dense} in $X$, if $\mb{c}_X(m) \sim 1_X$.
\end{definition}

 A morphism $f$ in $\cx$ is called {\it quasi $\mb C$-dense}, whenever $m_{f}$ is quasi $\mb C$-dense in $\cx$. We denote by $\ce^{ QC}$, the class of all quasi $\mb C$-dense morphisms in $\cx$. Let $\mc{M}^{QC}$ be the class of quasi $\mb C$-closed members of $\mc M$.

\begin{remark}
	Let $f$ and $g$ be two morphisms in the category $\cx$.
	
	(a) $ f \sim 1_X $ is equivalent to $ 1_X \leqslant f $ which is equivalent to $f$ being a split epi.
	
	(b) If $f \leqslant g $ and $f$ is a split epi, then $g$ is a split epi.
	
\end{remark}

 \begin{example}
	(i) In Example~\ref{examp}, (1), (2), (3), (4), (9), (10) and (11)
	members of $\cm$ are all quasi $\mb C$-closed.
	
	(ii) In the Example~\ref{examp}, (1), (2), (3), (4), (7) and (9) members
	of $\cm$ are all quasi $\mb C$-dense.
\end{example}

 \begin{proposition}\label{mqc closed under compo}
	Suppose that $\cm$ has $\cx$-pullbacks and $\mb C$ is a closure operator. Then $\cm^{QC}$ has $\cx$-pullbacks.
\end{proposition}

 \begin{proof}
	 Let $m \in \cm^{QC}$ and the following diagram
	$$\xymatrix{ M^* \ar [d]_{m^*} \ar@{} [dr] |{p.b.} \ar[r]^{f^*} & M \ar [d]^{m} \\
		X \ar[r]_{f} & Y }$$
	be a pullback in $\cx$. Thus $m^* \in \cm$. By \Cref{wdl}, there exists $\xymatrix{ \mb{c}_X(M^*) \ar[r]^{\hs{1}w} & \mb{c}_Y(M)}$such that $\mb{c}_Y(m) w = f \mb{c}_X(m^*)$. Since $\mb{c}_Y(m) \sim m$, there exists$\xymatrix{q: \mb{c}_Y(M) \ar[r] & M}$such that $\mb{c}_Y(m) = m q$. Therefore $f \mb{c}_X(m^*) = \mb{c}_Y(m) w = m (q w)$ and hence there exists a unique morphism$\xymatrix{ \mb{c}_X(M^*) \ar[r]^{\hs{3}r} &M^*}$such that $m^* r = \mb{c}_X(m^*)$. Thus $\mb{c}_X(m^*) \leq m^* \leq \mb{c}_X(m^*)$ and so $m^* \sim \mb{c}_X(m^*)$. Therefore $m^* \in \cm^{QC}$.
\end{proof}

 \begin{proposition}\label{QD is closed under comp. with iso}
	Suppose that $\cx$ has quasi right $\cm$-factorizations and $\mb C$ is a closure operator.
	\begin{itemize}
		\itema Let $\cm$ be closed under composition with isomorphisms on the left. For each morphism $\hs{-1}\xymatrix{X \ar[r]^f & Y}\hs{-1}\in \ce^{ QC}$ and isomorphism $\hs{-1}\xymatrix{Y \ar[r]^{\alpha} & Z}\hs{-1}$ in $\cx$ we have $\alpha f \in \ce^{ QC}$.
		
		\itemb If $f \leqslant g$ and $f \in \ce^{ QC}$, then $g \in \ce^{ QC}$.
	\end{itemize}
\end{proposition}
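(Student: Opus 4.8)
The plan is to treat both parts through the common observation that $1_X$ is the top element of $\cm/X$ under $\le$ (its sieve $\la 1_X\ra$ consists of all morphisms into $X$), so that an element $n\in\cm/X$ is quasi $\mb C$-dense, i.e. $\mb{c}_X(n)\sim 1_X$, precisely when the single inequality $1_X\le \mb{c}_X(n)$ holds; the reverse inequality $\mb{c}_X(n)\le 1_X$ is automatic. I also record that $\mb C$ respects $\sim$: monotonicity (b) applied both ways turns $n\sim n'$ into $\mb{c}_X(n)\sim\mb{c}_X(n')$, so the closure is well defined on $\sim$-classes and membership in $\ce^{QC}$ depends only on the sieve of the quasi right part involved.

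For (a), I would set $m=f(1_X)\in\cm/Y$, so that the hypothesis $f\in\ce^{QC}$ reads $\mb{c}_Y(m)\sim 1_Y$. Since $\alpha(f)$ and $(\alpha f)(1_X)$ are both quasi right parts of the same composite $\alpha f=(\alpha f)\circ 1_X$, Lemma~\ref{L:quasi right}(b) gives $\la(\alpha f)(1_X)\ra=\la\alpha(f)\ra$, and Proposition~\ref{p:composition by iso is not different} gives $\la\alpha(f)\ra=\la\alpha(f(1_X))\ra=\la\alpha(m)\ra$. Because $\mb{c}_Z$ respects $\sim$, it therefore suffices to prove $\mb{c}_Z(\alpha(m))\sim 1_Z$, equivalently $1_Z\le\mb{c}_Z(\alpha(m))$.

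The remaining work in (a) is to feed $m$ through continuity. Applying the continuity property (c) to $\alpha\colon Y\to Z$ and $m\in\cm/Y$ yields $\alpha(\mb{c}_Y(m))\le\mb{c}_Z(\alpha(m))$. From $\mb{c}_Y(m)\sim 1_Y$ and the order-preservation property $\la p\ra\su\la q\ra\Rightarrow\la h(p)\ra\su\la h(q)\ra$ of the Remark (with $h=\alpha$) I get $\alpha(\mb{c}_Y(m))\sim\alpha(1_Y)$. Finally, since all identities lie in $\cm$ and $\cm$ is closed under composition with isomorphisms, $\alpha=\alpha 1_Y\in\cm$, so $\alpha(1_Y)$ is a quasi right part of $\alpha\in\cm$, whence Lemma~\ref{L:quasi right}(a) gives $\la\alpha(1_Y)\ra=\la\alpha\ra=\la 1_Z\ra$, the last equality holding because $\alpha$ is an isomorphism. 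Chaining $1_Z\sim\alpha(1_Y)\sim\alpha(\mb{c}_Y(m))\le\mb{c}_Z(\alpha(m))$ gives $1_Z\le\mb{c}_Z(\alpha(m))$, and the top-element remark finishes (a).

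For (b), with $f\le g$ and common codomain $Y$ (so $g\colon W\to Y$ for some $W$), I would first show $f(1_X)\le g(1_W)$: from $\la f\ra\su\la g\ra\su\la g(1_W)\ra$, the inclusion on the right coming from the extension property (a$'$), and the minimality property (b$'$) of the quasi right part $f(1_X)$ of $f$ applied to the element $g(1_W)\in\cm/Y$, I obtain $\la f(1_X)\ra\su\la g(1_W)\ra$. Monotonicity (b) of $\mb C$ then gives $1_Y\sim\mb{c}_Y(f(1_X))\le\mb{c}_Y(g(1_W))$, and since $1_Y$ is the top element, $\mb{c}_Y(g(1_W))\sim 1_Y$, i.e. $g\in\ce^{QC}$. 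The only delicate point is the bookkeeping in (a)—correctly identifying $(\alpha f)(1_X)$, $\alpha(f)$ and $\alpha(f(1_X))$ as sieve-equivalent and confirming $\la\alpha\ra=\la 1_Z\ra$; once these are secured, continuity together with the top-element observation makes both parts routine.
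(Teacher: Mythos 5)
Your proof is correct and follows essentially the same route as the paper: part (a) is exactly the paper's one-line argument (continuity applied to $f(1_X)$, the identification $\la(\alpha f)(1_X)\ra=\la\alpha(f(1_X))\ra$ via Proposition~\ref{p:composition by iso is not different}, and $\alpha(1_Y)\sim 1_Z$) with the implicit steps spelled out, your derivation of $\alpha(1_Y)\sim 1_Z$ through $\alpha=\alpha 1_Y\in\cm$ and Lemma~\ref{L:quasi right}(a) being only a cosmetic variant of the paper's appeal to Proposition~\ref{p:composition of qrf by iso}. For part (b), which the paper dismisses as obvious, your argument via (b$'$)-minimality of $f(1_X)$ and monotonicity of $\mb{c}_Y$ is precisely the intended one.
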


 \begin{proof}
	(a) By Propositions~\ref{p:composition of qrf by iso} and~\ref{p:composition by iso is not different} and the continuity property of $\mb{C}$ we have $\alpha(\mb{c}_Y (f(1_X))) \leqslant \mb{c}_Z((\alpha f)(1_X))$. Since $\mb{c}_Y (f(1_X)) \sim 1_Y$ and $\alpha(1_Y) \sim 1_Z$, $\alpha f \in \ce^{ QC}$.
	
	(b) Obvious.
\end{proof}

 \begin{remark}
	Suppose that $\cx$ has quasi right $\cm$-factorizations and $\mb C$ is a closure operator.
	
	(a) For each $m, n\in \cm$ if $m \sim n$ and $m$ is quasi $\mb C$-dense, then $n$ is quasi $\mb C$-dense.
	
	(b) If $\mc{M}$ is a class of monos, then $m$ is quasi $\mb C$-closed (dense) if and only if $m$ is $\mb C$-closed (dense).
\end{remark}

 \section{Quasi idempotent and quasi weakly hereditary closure operator}
In this section we define quasi idempotent and quasi weakly hereditary closure operators and we show which one of the examples in the previous section have these properties. Finally we prove under what conditions on the closure operator we have another quasi right(left) factorization structure.

 \begin{definition}
	Suppose that $\cx$ has quasi right $\cm$-factorizations and $\mb C$ is a closure operator. $\mb C$ is called:
	
	(a) {\it quasi idempotent}, if for each $X\in \mc X$ and $m\in \mc{M}/X$, $\mb{c}_{X}(\mb{c}_{X}(m))\sim \mb{c}_{X}(m)$ (see \cite{mh11}).
	
	(b) {\it quasi weakly hereditary}, if for each $X\in \mc X$ and $\hs{-1}\xymatrix{m: M \ar[r] & X}\hs{-1}$ in $ \mc{M}$, there exists a quasi $\mb C$-dense morphism $\hs{-1}\xymatrix{j_m : M \ar[r] & \mb{c}_{X}(M)}\hs{-1}$ such that $\mb{c}_{X}(m) j_m = m$.
\end{definition}

 \begin{example}
	(1) In Example~\ref{examp}, (1), (2), (3), (4), (7), (9), (10) and
	(11) the closure operator is quasi idempotent.
	
	(2) In Example~\ref{examp}, (1), (2), (3), (4), (9), (10) and (11)
	the closure operator is quasi weakly hereditary.
\end{example}

 With $(\ce , \cm)$-factorization structure as defined in \cite{ahs}, we have:

 \begin{theorem} \cite{mh11} Suppose that $\cx$ has $(\ce , \cm)$-factorization structure and $\mb C$ is a quasi idempotent closure operator. Then $\cm^{QC}$ is a quasi right factorization structure for $\cx$.
\end{theorem}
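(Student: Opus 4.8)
The plan is to show that $\cm^{QC}$ satisfies the two defining conditions (a) and (b) of a quasi right factorization structure, by combining the existing $(\ce,\cm)$-factorization with the closure operator $\mb C$. Given an arbitrary morphism $\xymatrix{f: Y \ar[r] & X}$, I would first apply the $(\ce,\cm)$-factorization to write $f = m e$ with $m \in \cm/X$ and $e \in \ce$. The natural candidate for the quasi $\cm^{QC}$-part of $f$ is then $\mb{c}_X(m)$, the $\mb C$-closure of $m$: by quasi idempotency, $\mb{c}_X(\mb{c}_X(m)) \sim \mb{c}_X(m)$, so $\mb{c}_X(m)$ is quasi $\mb C$-closed and hence lies in $\cm^{QC}/X$.

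**Next I would verify condition (a)**, namely that $f$ factors through $\mb{c}_X(m)$, equivalently $\la f \ra \su \la \mb{c}_X(m)\ra$. Since $f = me$ we have $\la f \ra \su \la m \ra$, and by the extension property of the closure operator $m \le \mb{c}_X(m)$, i.e. $\la m \ra \su \la \mb{c}_X(m) \ra$; chaining these gives $\la f \ra \su \la \mb{c}_X(m)\ra$, which is condition (a$'$). **For condition (b)**, I would take any $n \in \cm^{QC}/X$ with $\la f \ra \su \la n \ra$ and must show $\la \mb{c}_X(m)\ra \su \la n \ra$, i.e. $\mb{c}_X(m) \le n$. The idea is to use monotonicity of $\mb C$ together with the fact that $n$ is quasi $\mb C$-closed: from $m \le n$ (which I would first need to extract from $\la f \ra \su \la n \ra$ using the $\cm$-part structure) monotonicity gives $\mb{c}_X(m) \le \mb{c}_X(n)$, and since $n \in \cm^{QC}$ we have $\mb{c}_X(n) \sim n$, so $\mb{c}_X(m) \le n$ as required.

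**The main obstacle** I anticipate is the step in (b) of passing from the hypothesis $\la f \ra \su \la n\ra$ to the comparison $m \le n$ at the level of $\cm$-parts. Because $f = me$ only tells us $\la f \ra \su \la m \ra$ directly, and we instead know $\la f \ra \su \la n \ra$, I would need to argue that $m$ is (up to $\sim$) the quasi right $\cm$-part of $f$ in the sense of the underlying $(\ce,\cm)$-factorization, and then invoke the defining property of quasi right $\cm$-factorizations to deduce $\la m \ra \su \la n \ra$. This requires knowing that an honest $(\ce,\cm)$-factorization supplies a quasi right $\cm$-factorization with $\cm$-part $m$; I would establish this compatibility first, likely by appealing to the diagonalization property of $(\ce,\cm)$-factorizations, which guarantees that whenever $f = me$ with $e\in\ce$ and $\la f\ra\su\la n\ra$ with $n\in\cm$, the morphism $m$ factors through $n$. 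Once this bridge between the two factorization notions is in place, the remaining applications of extension, monotonicity, and quasi idempotency are routine. A final bookkeeping point is to confirm that $\cm^{QC}$ contains all identities and is suitably closed, so that $\mb{c}_X(m)$ genuinely lives in $\cm^{QC}/X$ and the construction is well defined for every $f$.
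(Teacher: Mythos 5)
Your proposal is correct and follows essentially the same route as the paper's own argument (given for the companion Theorem~\ref{T:closed right}): factor $f$, replace the $\cm$-part $m$ by $\mb{c}_X(m)$, which lies in $\cm^{QC}$ by quasi idempotency, obtain condition (a) from the extension property, and obtain condition (b) from $m\leq n$ together with monotonicity and $\mb{c}_X(n)\sim n$. The bridging step you flag---extracting $m\leq n$ from $\la f \ra \su \la n \ra$---is exactly where the paper invokes the comparison property of the factorization (there, property (b) of quasi right $\cm$-factorizations; in your setting, the $(\ce,\cm)$-diagonalization), so your handling of it is the intended one.
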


 \begin{theorem}\label{T:closed right}
	Let $\cx$ has quasi right $\cm$-factorizations. For a quasi idempotent closure operator $\mb C$, $\cx$ has quasi right $\cm^{ QC}$-factorizations.
\end{theorem}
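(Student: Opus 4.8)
The plan is to construct the quasi right $\cm^{QC}$-part of an arbitrary morphism directly from the quasi right $\cm$-part already available by hypothesis, closing it off with $\mb C$. Given $\xymatrix{f: Y \ar[r] & X}$, let $m_f \in \cm/X$ be a quasi right $\cm$-part of $f$, which exists because $\cx$ has quasi right $\cm$-factorizations, and take as candidate $n = \mb{c}_X(m_f)$. The first thing to observe is that $n$ in fact lies in $\cm^{QC}/X$: since $\mb{c}_X$ maps $\cm/X$ into $\cm/X$ we have $n \in \cm/X$, and quasi idempotency gives $\mb{c}_X(n) = \mb{c}_X(\mb{c}_X(m_f)) \sim \mb{c}_X(m_f) = n$, so $n$ is quasi $\mb C$-closed.

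Next I would verify the factorization condition (a$'$), namely $\la f \ra \su \la n \ra$. This chains two inclusions: $\la f \ra \su \la m_f \ra$ holds because $m_f$ is a quasi right $\cm$-part of $f$, while $\la m_f \ra \su \la \mb{c}_X(m_f) \ra = \la n \ra$ holds by the extension property $m_f \le \mb{c}_X(m_f)$.

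For the minimality condition (b$'$), suppose $m \in \cm^{QC}/X$ satisfies $\la f \ra \su \la m \ra$; I must show $\la n \ra \su \la m \ra$. Since $m \in \cm/X$, condition (b$'$) for the underlying $\cm$-factorization yields $\la m_f \ra \su \la m \ra$, that is $m_f \le m$. Monotonicity then gives $\mb{c}_X(m_f) \le \mb{c}_X(m)$, and because $m$ is quasi $\mb C$-closed we have $\mb{c}_X(m) \sim m$; combining these, $n = \mb{c}_X(m_f) \le m$, which is exactly $\la n \ra \su \la m \ra$.

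The argument is short, and the only two steps carrying real content are the appeal to quasi idempotency to ensure the candidate $n$ is itself quasi $\mb C$-closed, and the use of quasi $\mb C$-closedness of the competitor $m$ to descend from $\mb{c}_X(m)$ back to $m$ in the minimality step; everything else is bookkeeping in the preorder on $\cm/X$. I do not expect a genuine obstacle, but one should keep in mind that $m_f$ and hence $n$ are determined only up to $\sim$, so it is worth noting that monotonicity makes $\mb{c}_X$ well defined on $\sim$-classes, which is what renders the choice of representatives immaterial.
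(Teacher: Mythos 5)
Your proposal is correct and follows essentially the same route as the paper's proof: both take the quasi right $\cm$-part $m_f$, pass to $\mb{c}_X(m_f)$ as the candidate (quasi $\mb C$-closed by quasi idempotency), obtain condition (a) from the extension property, and obtain condition (b) by combining minimality of $m_f$ with monotonicity and $\mb{c}_X(m) \sim m$ for the closed competitor. The only difference is presentational: you argue in the sieve/preorder formulation (a$'$), (b$'$), while the paper works diagrammatically with the factorization $m = \mb{c}_Y(m)\, j_m$.
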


 \begin{proof}
	For a given morphism $f$ in $\cx$, $f \leqslant m_f \leqslant \mb{c}(m_f)$ which is quasi closed. That is $\mb{c}(m_f) \in \cm^{QC}$. If $f \leqslant n$ with $n$ quasi closed, then $m_f \leqslant n$. Thus $\mb{c}(m_f) \leqslant \mb{c}(n) \sim n$.
\end{proof}

\begin{remark}
	For a closure operator $\mb C$ we have $ \cm^{ QC} \bigcap \ce^{ QC} = \{f\in \cm \mid f \sim 1\}$. If $\cm\su QM(\cx)$, then $ \cm^{ QC} \bigcap \ce^{ QC} = Iso(\cx) \cap \cm $.
\end{remark}

\begin{proposition}\label{P:left part is dense}
	Suppose $\cm \su QM(\cx)$ is closed under composition. Also suppose $\cx$ has quasi right $\cm$-factorizations and $\mb C$ is a closure operator. If $$\xymatrix{X \ar[r]^{\hs{-4}f} & Y = X \ar[r]^{\hs{3}e} & M \ar[r]^{m_f} & Y}$$ is a quasi right $\cm$-factorization of $f$, then $e \in \ce^{ QC}$ and $e(1_X) , \mb{c}_{M}(e(1_X)) \in Iso(\cx)$.
\end{proposition}

 \begin{proof}
	Since $f \leqslant m_f m_e$, by Lemma \ref{L:quasi right} (a) and (c), $m_f \leqslant m_f m_e$. By Proposition \ref{p:quasi mono}(a) we have $1_M \leqslant m_e$ and so $e \in \ce^{ QC}$. Since $1_M \leqslant m_e \leqslant \mb{c}_{M}(m_e)$, we have $1_M \sim m_e \sim \mb{c}_{M}(m_e)$. Thus $m_e$ and $\mb{c}_{M}(m_e)$ are split epis. Since $m_e$ and $\mb{c}_{M}(m_e)$ are quasi monos, by Lemma~\ref{l:quasi mono split epi is iso} they are isomorphisms.
\end{proof}

 \begin{proposition}\label{P:closure of quasi dense is iso}
	Suppose that $\cx$ has quasi right $\cm$-factorizations and $\cm \su QM(\cx)$. If $\hs{-1}\xymatrix{X \ar[r]^e & M}\hs{-1}$ is quasi $\mb{C}$-dense, then $\mb{c}_{M}(m_e)$ is an isomorphism.
\end{proposition}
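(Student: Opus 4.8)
The plan is to unwind the definition of quasi $\mb C$-density and then lean on the hypothesis $\cm \su QM(\cx)$. Write $c = \mb{c}_M(e(1_X))$; since $\mb C$ is a closure operator, $c \in \cm/M$, and since $e$ is quasi $\mb C$-dense the morphism $e(1_X)$ is quasi $\mb C$-dense in $M$, which by definition means $c \sim 1_M$, that is $\la c \ra = \la 1_M \ra$. The goal is to show that this $c$ is in fact an isomorphism.

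First I would extract a section. From $\la c \ra = \la 1_M \ra$ we obtain $1_M \in \la c \ra$, so there is a morphism $s$ with $cs = 1_M$; thus $c$ is a retraction. Writing $C$ for the domain of $c$, the task reduces to proving $sc = 1_C$. Next I would invoke that $c$ is a quasi monomorphism: from $cs = 1_M$ I compute $c(sc) = (cs)c = c = c\,1_C$, so $c(sc) = c(1_C)$, and since $c \in \cm \su QM(\cx)$, Definition~\ref{quasi mono}(a) gives $\la sc \ra = \la 1_C \ra$. In particular $1_C \in \la sc \ra$, so there is a morphism $t$ with $1_C = (sc)t = s(ct)$.

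The final step converts this sieve-level information into genuine equalities. Composing $1_C = s(ct)$ on the left with $c$ and using $cs = 1_M$ yields $c = c\,1_C = (cs)(ct) = ct$; feeding $ct = c$ back into $1_C = s(ct)$ gives $sc = 1_C$. Together with $cs = 1_M$, this exhibits $c = \mb{c}_M(e(1_X))$ as an isomorphism with inverse $s$.

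The main obstacle is precisely this last point. The quasi monomorphism property returns only equality of generated sieves, $\la sc \ra = \la 1_C \ra$, and never directly the equation $sc = 1_C$ that one ultimately needs; a split epimorphism that is merely a quasi monomorphism is not obviously an isomorphism. The maneuver that bridges this gap is to extract the explicit factorization $1_C = s(ct)$ from the sieve membership and then pre-compose with $c$, so that the already-secured relation $cs = 1_M$ collapses $ct$ to $c$ and promotes the sieve-theoretic statement to the honest identity $sc = 1_C$.
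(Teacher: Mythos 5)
Your proof is correct and takes essentially the same route as the paper's: both extract a section (your $s$, the paper's $g$) from $\mb{c}_M(e(1_X))\sim 1_M$, apply the quasi monomorphism property of $\mb{c}_M(e(1_X))\in\cm$ to the equation $c(sc)=c\,1_C$ to get $\la sc\ra=\la 1_C\ra$, extract a morphism ($t$, resp.\ $h$) witnessing $sct=1_C$, and then collapse it algebraically using $cs=1_M$ to obtain the honest identity $sc=1_C$. The paper's $f,g,h$ correspond exactly to your $c,s,t$, so there is no gap.
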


 \begin{proof}
	Since $e \in \ce^{ QC}$ and $\mb{c}_{M}(m_e) \sim 1_M$, there exist morphisms $f$ and $g$ such that $f = 1_M f = \mb{c}_{X}(m_e) $ and $\mb{c}_{M}(m_e) g = 1_M$. Hence $f g = 1_M$ and $f g f = f$. Since $f = \mb{c}_{X}(m_e) \in \cm$, we have $ g f \sim 1_{_{\mb{c}_{M}(e(X))}}$ and there exists a morphism $h$ such that $g f h = 1_{_{\mb{c}_{M}(e(X))}}$. Therefore $f$ is an isomorphism.
\end{proof}
	
\begin{notation} We write, $e' \hs{1}\ulto{e} m$, whenever in the unbroken commutative diagram,
	$$\xymatrix{ \cdot \ar[r]^{e} \ar @{} [dr] |{\hs{-6}///} \ar [d]_{e'} & \cdot \ar[d]^{m}\\
		\cdot \ar[r] \ar@ {.>}[ur]_{d} & \cdot}$$
	there exists a morphism $d$ such that $e = d e'$.
\end{notation}

 \begin{remark}
	$e' \hs{1}\ulto{e} m$ is equivalent to: if $ \rangle me\langle \hspace{2mm}\subseteq\hspace{2mm} \rangle e'\langle $, then $ \rangle e\langle \hspace{2mm} \subseteq \hspace{2mm} \rangle e'\langle $.
\end{remark}

 Now we can define the class ${}^{\ulte}\hs{-2}\cm$ as follows:

 ${}^{\ulte}\hs{-2}\cm:\stackrel{\text{\tiny
		def}}{=} \{ e'\in \ce\mid e'\hs1 \ulto{e} m,\hskip1mm \forall e\in \ce \hskip1mm \text{and} \hs1 \forall m \in \cm \}$.

 For a closure operator $\mb C$ consider the following property:

 \begin{definition}
	Suppose $\mc{M}$ is a class of morphisms that is closed under composition, $\cx$ has quasi right $\cm$-factorizations and $\mb C$ is a closure operator. We say $\mb C$ satisfies the property (QCD) if compositions of quasi $\mb C$-dense morphisms are quasi $\mb C$-dense.
\end{definition}

 \begin{theorem}\label{T:weakly hereditary left}
	Suppose that $\cx$ has quasi right $\cm$-factorizations and $\mb C$ is a closure operator. If $ \cm \su QM(\cx) $ is closed under composition and $\ce^{QC} \su {}^{\ultqd}\hs{-2}\cm$, then $\cx$ has quasi left $\ce^{QC}$-factorization structures.
\end{theorem}

 \begin{proof}
	Suppose that $f = m_f e$ is a quasi right $\cm$-factorization of $f$. We will show that this factorization is also a quasi left $\ce^{QC}-$factorization of $f$. By proposition ~\ref{P:left part is dense}, we have $e \in \ce^{QC}$. Given $e' \in \ce^{QC}$ such that $\ra f \la \su \ra e' \la$, since $\ce^{QC} \su {}^{\ultqd}\hs{-2}\cm$, $\ra e \la \su \ra e' \la$. Therefore $f = m_f e$ is a quasi left $\ce^{QC}-$factorization of $f$.
\end{proof}

 \begin{theorem}
	Suppose that $\cx$ has quasi right $\cm$-factorizations and $\mb C$ is a quasi weakly hereditary closure operator. If $ \cm \su QM(\cx) $ is closed under composition, $\ce^{QC} \su {}^{\ultqd}\hs{-2}\cm$ and $ \ce^{QC} \su QE(\cx) $, then $\cm^{QC} = \cm$.
\end{theorem}

 \begin{proof}
	We only need to prove that $ \cm \su \cm^{QC}$. Let $m$ be an element of $\cm$ and $j_m$ be its quasi $\mb C$-dense morphism. Since $1_M \in \ce^{QC} \su {}^{\ultqd}\hs{-2}\cm$, $\ra 1_M \la \su \ra j_m \la$. So we have $\mb{c}(m) \leqslant m$, because $ \ce^{QC} \su QE(\cx)$. Thus $m \in \cm^{QC}$.
\end{proof}

 \section{Quasi factorization structures}
In this section the notations $\ch^{\APLup}$ and ${}^{\APLdown}\ch $ are introduced and after studying some of their properties, the notion of quasi factorization structure in a category $\cx$ is given. We will see that weak factorization structures as defined in \cite{ahr} are quasi factorization structures, but the converse is not true as we will show by some examples. Finally we state the relation between a quasi factorization structure and a quasi idempotent and quasi weakly hereditary closure operator.

 \begin{notation}
	
	(a) Given $\hs{-1}\xymatrix{E \ar[r]^{e} & X}\hs{-1} \in \cx$ and
	$\hs{-1}\xymatrix{M \ar[r]^{m} & X}\hs{-1}\in \cx$, $e \APLdown m$
	means that in every commutative triangle,
	$$\xymatrix{E \ar[rr]^{u} \ar[dr]_{e} \ar @{} [dr] |{\hs{14}///} & & M \ar[dl]^{m}\\
		& X &} $$
	there exists $\hs{-1}\xymatrix{w:X \ar[r] & M}\hs{-1}$ such that $m w \sim
	1_X$. (Note that $e \APLdown m$ means that $ e \leqslant m \Rightarrow 1_X
	\leqslant m$.)
	
	(b) Given $\hs{-1}\xymatrix{M \ar[r]^{e} & E}\hs{-1} \in \cx$ and
	$\hs{-1}\xymatrix{M \ar[r]^{m} & X}\hs{-1}\in \cx$, $e \APLup m$
	means that in every commutative triangle,
	$$\xymatrix{ & M \ar[dl] _{e} \ar[dr] ^{m} \ar @{} [dr] |{\hs{-14}///}\\
		E \ar[rr]_{v} & & X}$$
	there exists $\hs{-1}\xymatrix{w:E \ar[r] & M}\hs{-1}$ such that $ m w \sim v$.
	
	Let $\ch$ be a class of morphisms. We denote by $\ch^{\APLup}$ the class of all morphisms $m$ with
	$$h \APLup m \hs{2} \text{ for all } \hs{2} h\in \ch$$
	and similarly, by ${}^{\APLdown}\ch $ the class of all morphisms $e$ with
	$$e \APLdown h \hs{2} \text{ for all } \hs{2} h\in \ch.$$
\end{notation}

 Saying $\ch$ has $\cx$-pushouts if the pushout of
each morphism in $\ch$ exists and is in $\ch$, we have:

 \begin{proposition}\label{P:left right triangle} For each classes $\ch$,
$\ch_1$ and $\ch_2$ we have:
\begin{itemize}
	\itemi If $\ch_1 \hs{1}\su \ch_2$, then ${}^{\APLdown}\ch_2 \hs{1}\su
	{}^{\APLdown}\ch_1$ and $\ch_2^{\APLup} \su \ch_1^{\APLup}$.
	
	\itemii $\text{Ret}(\cx) \su {}^{\APLdown}\ch$.	
	
	\itemiii If $\ch \su QE(\cx)$, then $\text{Sec}(\cx) \su \ch^{\APLup}$.
	
	\itemiv If $\ch$ has $\cx$-pullbacks, then ${}^{\APLdown}\ch$ closed under
	composition. Dually, if $\ch$ has $\cx$-pushouts, then $\ch^{\APLup}$ closed
	under composition.
\end{itemize}
\end{proposition}

 \begin{proof}
The proof of (i) and (ii) follows directly from the definition.

 (iii) Suppose that the following commutative diagram
$$\xymatrix{ & M \ar[dl] _{h} \ar[dr] ^{s} \ar @{} [dr] |{\hs{-14}///}\\
	H \ar[rr]_{v} & & X}$$
such that $s$ is a section and $h \in \ch$ is given. Thus, $h$ is a section and
since $h$ is a quasi epi, by \cref{l:quasi mono split epi is iso}(b) $h \in
\iso(\cx)$. Put $w = h^{-1}$ and so $s w = v$.

 (iv) Suppose that$\xymatrix{E \ar[r]^{e_2} & E_1 \ar[r]^{e_1} & X}$are
composable morphisms in ${}^{\APLdown}\ch$ and the following commutative
triangle is given
$$\xymatrix{E \ar[rr]^{u} \ar[dr]_{e_1 e_2} \ar @{} [dr] |{\hs{14}///} & & M
\ar[dl]^{h}\\
	& X &} $$
such that $h \in \ch$. Since $\ch$ has $\cx$-pullbacks, we have
	$$\xymatrix{E \ar@{} [ddr] |{_{\hs{-5}///}} \ar@{} [drr] |{_{\hs{5}///}}
	\ar@/_2pc/[ddr]_{e_2} \ar@/^2pc/[drr]^{u}
	\ar@{.>}[dr]|{^{\exists ! d}} & \\
	& N \ar [d]_{e_1^{-1}(h)} \ar@{} [dr] |{p.b.}
	\ar[r]^{\hs{3}e_1^*}
	& M \ar [d]^{h} & \\
	& E_1 \ar[r]_{e_1} & X } $$
Thus, there exists a morphism$\xymatrix{E_1 \ar[r]^{w_1} & N}$ such that
$e_1^{-1}(h) w_1 \sim 1_{E_1}$ and hence $e_1^{-1}(h) w_1 g = 1_{E_1}$ for some
morphism$\xymatrix{g: E_1 \ar[r] & E_1.}$ Therefore we have the following
commutative triangle
$$\xymatrix{E_1 \ar[rr]^{e_1^* w_1 g} \ar[dr]_{e_1} \ar @{} [dr] |{\hs{14}///}
& & M
	\ar[dl]^{h}\\
	& X &} $$
and so there exists a morphism$\xymatrix{X \ar[r]^{w} & M}$such that $h w \sim
1_X$. This implies that $e_1 e_2 \in {}^{\APLdown}\ch$. The proof of the dual
is similar.
\end{proof}
\begin{proposition}\label{P:left triangle}
	Suppose that $\cx$ has quasi right $\cm$-factorizations and $\cm\su QM(\cx)$. If $\mb C$ is a closure operator, then $\ce^{QC} \su {}^{\APLdown}(\cm^{QC})$.
\end{proposition}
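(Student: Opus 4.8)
The plan is to unwind the definition of ${}^{\APLdown}(\cm^{QC})$: a morphism lies in it precisely when it stands in the relation $\APLdown$ to every member of $\cm^{QC}$. So I would fix arbitrary $e \colon E \to X$ in $\ce^{QC}$ and $m \colon M \to X$ in $\cm^{QC}$, take an arbitrary commutative triangle witnessing the hypothesis of $e \APLdown m$ (a morphism $u \colon E \to M$ with $e = m u$), and aim to produce $w \colon X \to M$ with $m w \sim 1_X$. In fact I expect the stronger equality $m w = 1_X$ to drop out.

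First I would pass to the quasi right $\cm$-factorization of $e$, whose $\cm$-part is $e(1_E) \colon e(E) \to X$ (this exists by the standing assumption that $\cx$ has quasi right $\cm$-factorizations and that identities lie in $\cm$). Since $m \in \cm / X$ and $e = m u$ exhibits $e$ as factoring through $m$, the defining property (b$'$) of the quasi right part forces $\la e(1_E) \ra \su \la m \ra$, i.e. $e(1_E) \leq m$ in the preorder on $\cm / X$. This is where the universal property carries the argument, and getting its direction right --- the quasi right part is the \emph{least} element of $\cm/X$ above $e$ --- is the conceptual crux.

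Next I would transport this inequality through the closure. Monotonicity of $\mb{c}_X$ gives $\mb{c}_X(e(1_E)) \leq \mb{c}_X(m)$. Invoking the two hypotheses, $e$ quasi $\mb{C}$-dense gives $\mb{c}_X(e(1_E)) \sim 1_X$, and $m \in \cm^{QC}$ quasi $\mb{C}$-closed gives $\mb{c}_X(m) \sim m$. Chaining via the compatibility of $\sim$ with $\leq$ yields $1_X \leq \mb{c}_X(e(1_E)) \leq \mb{c}_X(m) \sim m$, so $1_X \leq m$, that is $\la 1_X \ra \su \la m \ra$.

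Finally, since $\la 1_X \ra$ is the maximal sieve on $X$, the inclusion $\la 1_X \ra \su \la m \ra$ places $1_X$ in the sieve generated by $m$, whence $1_X = m w$ for some $w \colon X \to M$ and in particular $m w \sim 1_X$, exactly the requirement of $e \APLdown m$. As $m$ was an arbitrary member of $\cm^{QC}$, this gives $e \in {}^{\APLdown}(\cm^{QC})$. I expect the only real obstacle to be bookkeeping: keeping the direction of the preorder straight and correctly interleaving $\leq$ with $\sim$ when moving denseness and closedness across the monotone closure. The standing hypotheses $\cm \su QM(\cx)$ closed under composition and quasi idempotency of $\mb{C}$ secure the ambient theory in which $\ce^{QC}$ and $\cm^{QC}$ behave well (cf. Theorem~\ref{T:closed right} and Propositions~\ref{P:left part is dense}, \ref{P:closure of quasi dense is iso}), but the combinatorial heart is the single inequality chain above.
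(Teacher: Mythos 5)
Your proof is correct, and it takes a genuinely different --- and more elementary --- route than the paper's. The paper argues structurally: using quasi idempotency, it invokes Theorem~\ref{T:closed right} to recognize $e = \mb{c}_{X}(e(1_E))\,(j e_1)$ as a quasi right $\cm^{QC}$-factorization of $e$, applies the universal property of that factorization to the given factorization $e = mu$ through $m \in \cm^{QC}$ to obtain $w_1$ with $m w_1 = \mb{c}_{X}(e(1_E))$, and then uses Proposition~\ref{P:closure of quasi dense is iso} (this is where $\cm \su QM(\cx)$ enters) to conclude that $\mb{c}_{X}(e(1_E))$ is an isomorphism, finally setting $w = w_1\,\mb{c}_{X}(e(1_E))^{-1}$ so that $m w = 1_X$. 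You reach the same equality $m w = 1_X$ by a purely order-theoretic chain: property (b$'$) gives $e(1_E) \leq m$, monotonicity gives $\mb{c}_{X}(e(1_E)) \leq \mb{c}_{X}(m)$, and quasi denseness of $e$ together with quasi closedness of $m$ convert the two ends into $1_X \leq m$, whence $1_X \in \la m \ra$ yields the section $w$. Your route buys something real: it nowhere uses $\cm \su QM(\cx)$, closure of $\cm$ under composition, or quasi idempotency of $\mb{C}$ --- only the standing assumptions that identities lie in $\cm$ and that $\cx$ has quasi right $\cm$-factorizations --- so you have in fact proved the proposition with its stated hypotheses deleted. Your closing remark is therefore slightly off: those hypotheses do not ``secure the ambient theory'' for your argument --- they are idle in it, and are needed only for the paper's route through Theorem~\ref{T:closed right} and Proposition~\ref{P:closure of quasi dense is iso}. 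What the paper's heavier approach buys in exchange is explicit structural information (the closed part $\mb{c}_{X}(e(1_E))$ is the quasi right $\cm^{QC}$-part of $e$ and is invertible), which it reuses in later arguments such as Corollary~\ref{C:quasi}.
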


 \begin{proof}
	Suppose that $m u = e$,	where $e \in \ce^{ QC}$ and $m \in \cm^{ QC}$.
	Consider the quasi right $\cm$-factorization of $e$ as$$\xymatrix{E
	\ar[r]^{\hs{-4}e} & X = E \ar[r]^{\hs{2}e_1} & e(E) \ar[r]^{m_e} & X.}$$
	Since $e \in \ce^{ QC}$, by Proposition~\ref{P:closure of quasi dense is
	iso} we have $\mb{c}_{X}(m_e)$ is an isomorphism and so $\mb{c}_{X}(m_e)
	\in \cm^{ QC}$. We have $m_e = \mb{c}_{X}(m_e) j$ and so $e = \mb{c}_{X}(m_e) (j e_1)$. If $e \leqslant n$ with $n$ quasi closed, then $m_e \leqslant n$. Therefore $\mb{c}_X(m_e) \leqslant \mb{c}_X(n) \sim n$. Thus the factorization $e = \mb{c}_{X}(m_e) (j e_1)$ is a quasi right $\cm^{ QC}$-factorization of $e$. So we have the following commutative diagram.
	$$\xymatrix{E \ar[r]^{u} \ar[d]_{j e_1} & M \ar[d]^{m}\\
		\mb{c}_{X}(e(E)) \ar@<1ex> @{.>}[ur]^{w_1} \ar[r]_{\hs{2} \mb{c}_{X}(m_e)} \ar @{} [ur] |{\hs{8}///} &X} $$
	Put $w :\stackrel{\text{\tiny def}}= w_1 \mb{c}_{X}(m_e)^{-1}$, so $m w = 1_X$. Therefore $\ce^{QC} \su {}^{\APLdown}(\cm^{QC})$.
\end{proof}

 \begin{proposition}\label{P:right triangle}
	Suppose that $\cx$ has quasi right $\cm$-factorizations and $\cm\su QM(\cx)$ is closed under composition. If $\mb C$ is a quasi weakly hereditary closure operator such that $\ce^{ QC} \su QE(\cx)$ and $\ce^{QC} \su {}^{\ultqd}\hs{-2}\cm$, then $\cm^{QC} \su (\ce^{QC})^{\APLup}$.
\end{proposition}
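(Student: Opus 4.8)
The plan is to verify the defining lifting condition one test morphism at a time. To establish $\cm^{QC}\su(\ce^{QC})^{\APLup}$ I would fix $m\in\cm^{QC}$ and an \emph{arbitrary} $h\in\ce^{QC}$ sharing the domain of $m$, say $\xymatrix{M\ar[r]^{h}&E}$ and $\xymatrix{M\ar[r]^{m}&X}$, together with any $\xymatrix{E\ar[r]^{v}&X}$ with $vh=m$; the goal is then to produce $\xymatrix{w:E\ar[r]&M}$ with $\la mw\ra=\la v\ra$. Since $h$ is arbitrary, this is exactly $h\APLup m$ for all $h\in\ce^{QC}$, i.e.\ $m\in(\ce^{QC})^{\APLup}$.

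The key observation is that the identity is the right morphism to feed into the hypothesis $\ce^{QC}\su{}^{\ultqd}\hs{-2}\cm$. First I would record that $1_M\in\ce^{QC}$: by Remark-type reasoning the $\cm$-part of the quasi right $\cm$-factorization of $1_M$ is $\sim 1_M$, and since $1_M$ is the top element of $\cm/M$ (every $n\in\cm/M$ factors as $1_M n$), the extension property forces $\mb{c}_M(1_M)\sim 1_M$, so $1_M$ is quasi $\mb C$-dense. With this in hand the square
$$\xymatrix{M \ar[r]^{1_M} \ar@{}[dr]|{\hs{-3}///} \ar[d]_{h} & M \ar[d]^{m}\\ E \ar[r]_{v} \ar@{.>}[ur]_{d} & X}$$
commutes, because $m\,1_M=m=vh$. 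As $h\in\ce^{QC}\su{}^{\ultqd}\hs{-2}\cm$, I may apply $h\hs1\ulto{1_M}m$ (legitimate since $1_M\in\ce^{QC}$ and $m\in\cm^{QC}\su\cm$) to obtain $\xymatrix{d:E\ar[r]&M}$ with $1_M=d\,h$; that is, $d$ is a retraction of $h$.

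It then remains to turn this retraction into the required sieve identity, and here I would invoke $\ce^{QC}\su QE(\cx)$. Putting $w:=d$, I compute $m\,w\,h=m\,d\,h=m\,1_M=m=v\,h$, so $m\,w$ and $v$ agree after precomposition with $h$. Since $h$ is a quasi epimorphism, this yields $\la m\,w\ra=\la v\ra$, which is precisely the conclusion of $h\APLup m$. Because $h\in\ce^{QC}$ and $m\in\cm^{QC}$ were arbitrary, $\cm^{QC}\su(\ce^{QC})^{\APLup}$.

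I expect the only genuine subtlety to be bookkeeping about which hypotheses are actually consumed. The argument uses only $\ce^{QC}\su{}^{\ultqd}\hs{-2}\cm$ and $\ce^{QC}\su QE(\cx)$; the remaining standing assumptions (quasi weak hereditariness, (QCD), and $\cm\su QM(\cx)$) are what make $\ce^{QC}$ well behaved and are inherited from the hypotheses of Theorem~\ref{T:weakly hereditary left}, but they are not needed for this particular containment (indeed the closedness of $m$ itself is never used, only $m\in\cm$). The main conceptual step is recognizing that plugging $e=1_M$ into the diagonal property $\ulto{}$ produces a one-sided inverse of $h$, after which quasi epimorphy upgrades the equation $m\,w\,h=v\,h$ to the sieve identity $\la m\,w\ra=\la v\ra$.
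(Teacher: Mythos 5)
Your proof is correct, and it takes a genuinely different route from the paper's. The paper proves the containment through the closure factorization of $m$ itself: from $\mb{c}_X(m)\sim m$ it extracts $g$ with $\mb{c}_X(m)=mg$ and (using $\cm\su QM(\cx)$) $g\,j_m\sim 1_M$; it then invokes Lemma~\ref{L:quasi right} and Theorem~\ref{T:weakly hereditary left} to recognize $m=\mb{c}_X(m)\,j_m$ as a quasi left $\ce^{QC}$-factorization of $m$, which against the triangle $ve=m$ yields $d'$ with $j_m=d'e$; setting $w=g\,d'$ gives $mwe=\mb{c}_X(m)d'e=ve$, and $\ce^{ QC}\su QE(\cx)$ finishes. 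You instead bypass the factorization machinery entirely: after checking $1_M\in\ce^{QC}$ (legitimate, via the standing assumption that identities lie in $\cm$, Lemma~\ref{L:quasi right}(a), monotonicity, and the fact that $\mb{c}_M(1_M)\leq 1_M$ trivially), you feed $e=1_M$ into the hypothesis $\ce^{QC}\su{}^{\ultqd}\hs{-2}\cm$ to split $h$ by a retraction $d$ with $dh=1_M$, and then the same quasi-epimorphism step upgrades $m\,d\,h=v\,h$ to $\la m\,d\ra=\la v\ra$. Both proofs place the entire burden of passing from an equation-after-precomposition to a sieve identity on $\ce^{QC}\su QE(\cx)$; the difference is how the diagonal is produced. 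Your route is shorter and, as you correctly observe, consumes strictly fewer hypotheses: quasi weak hereditariness, (QCD), $\cm\su QM(\cx)$, and even the quasi $\mb C$-closedness of $m$ are never used, so your argument in fact establishes the stronger containment $\cm\su(\ce^{QC})^{\APLup}$ and exposes redundancy in the stated hypotheses (it also reveals how strong the hypothesis $\ce^{QC}\su{}^{\ultqd}\hs{-2}\cm$ is: any quasi dense morphism through which some member of $\cm$ factors must be a split monomorphism). What the paper's longer route buys is structural coherence: its diagonal $w=g\,d'$ factors through $\mb{c}_X(M)$, keeping the construction aligned with the closure decomposition that is reused in Corollary~\ref{C:quasi}, whereas your retraction argument, while more elementary, does not exhibit that compatibility.
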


 \begin{proof}
	Suppose that $m= v e$, where $e \in \ce^{ QC}$ and $m \in \cm^{QC}$.
	First we claim that$\xymatrix{M \ar[r]^{\hs{-5}m} & X =
	M \ar[r]^{\hs{1}j_m} & \mb{c}_{X}(M) \ar[rr]^{\mb{c}_{X}(m)} & &X}$is a
	quasi right $\cm$-factorization of $m$. For this reason suppose that the
	unbroken
	commutative diagram,
	$$\xymatrix{M \ar[r]^{u} \ar[d]_{j_m} \ar @{} [dr] |{\hs{9}///} & N \ar[d]^{n}\\
		\mb{c}_{X}(M) \ar[r]_{ \hs{4}\mb{c}_{X}(m)} \ar@{.>}[ur]^{w} & X} $$
	with $n \in \cm$, is given. By Theorem~\ref{T:weakly hereditary left} the quasi right $\cm$-factorization,
	$$\xymatrix{M \ar[r]^{\hs{-5}u} & N = M \ar[r]^{\hs{4}e_1} & E \ar[r]^{m_u} & N}$$
	of $u$ is a quasi left $\ce^{QC}$-factorization of $u$ and so $e_1 \in \ce^{QC}$. Now consider the following unbroken commutative diagram.
	$$\xymatrix{M \ar[r]^{e_1} \ar[d]_{j_m} \ar @{} [dr] |{\hs{-8}///} & E \ar[d]^{n m_u}\\
		\mb{c}_{X}(M) \ar[r]_{ \hs{4}\mb{c}_{X}(m)} \ar@{.>}[ur]_{d_1} & X} $$
	Since $\mb{C}$ is a quasi weakly hereditary closure operator, $j_m \in
	\ce^{QC}$ and since $\cm$ is closed under composition, we have $n m_u \in
	\cm$. Thus $\ce^{QC} \su {}^{\ultqd}\hs{-2}\cm$ implies that there exists a
	morphism $\hs{-1}\xymatrix{d_1: \mb{c}_{X}(M) \ar[r] & E}\hs{-1}$ such that
	$d_1 j_m = e_1$. Now define $w' :\stackrel{\text{\tiny def}}= m_u d_1$, so
	we have $n w' j_m = n m_u d_1 j_m = n m_u e_1 = \mb{c}_{X}(m) j_m$.
	Since $j_m \in \ce^{QC}$ and $\ce^{ QC} \su QE(\cx)$, we have $ n w' \sim \mb{c}_{X}(m)$. Therefore there exists a morphism $w''$ such that $\mb{c}_{X}(m) = n w' w''$. Define $w :\stackrel{\text{\tiny def}}= w' w''$. Thus $\mb{c}_{X}(m) \leqslant n$ and the claim is proved. Therefore by Theorem~\ref{T:weakly hereditary left} the factorization $m = \mb{c}_{X}(m) j_m$ is a quasi left $\ce^{ QC}$-factorization of $m$. So we have the following diagram,
	$$\xymatrix{M \ar[r]^{e} \ar[d]_{j_m} \ar @{} [dr] |{\hs{-7}///} & E \ar@<1ex> @{.>}[dl]^{d'} \ar[d]^{v}\\
		\mb{c}_{X}(M) \ar[r]_{ \hs{2}\mb{c}_{X}(m)} & X} $$
	such that $j_m = d' e$. Since $m \in \cm^{QC}$, we have $\mb{c}_{X}(m) \sim m$. Thus there exists $\hs{-1}\xymatrix{g: \mb{c}_{X}(M) \ar[r] & M}\hs{-1}$ such that $\mb{c}_{X}(m) = m g$ and hence $m g j_m = m$. Put $d :\stackrel{\text{\tiny def}}= g d'$. Thus $m d = m g d' = \mb{c}_{X}(m) d'$ and so $m d e = \mb{c}_{X}(m) d' e = v e$. Since $\ce^{ QC}\su QE(\cx)$, we have $ m d \sim v$.
\end{proof}

\begin{proposition}\label{P:quasi left is quasi dense}
	Suppose that $\cx$ has quasi right $\cm$-factorizations. Then ${}^{\APLdown}\cm \su \ce^{QC}$.
\end{proposition}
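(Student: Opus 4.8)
The plan is to take an arbitrary $e\in{}^{\APLdown}\cm$, say $e: E\to X$, form its quasi right $\cm$-factorization $e=e(1_E)\,e_1$ with $e_1: E\to e(E)$ and $\cm$-part $e(1_E): e(E)\to X$, and then feed that factorization back into the defining lifting property of ${}^{\APLdown}\cm$. The point is that the $\cm$-part $e(1_E)$ is itself a member of $\cm$, so the commutative triangle with slanted legs $e$ and $e(1_E)$ and top edge $e_1$ is precisely a triangle of the form occurring in the definition of $e\APLdown e(1_E)$. This self-referential choice of test morphism---taking $m$ to be $e(1_E)$ rather than some external $m\in\cm$---is the only genuine idea in the argument. (By Lemma~\ref{L:quasi right} the $\cm$-part is determined up to $\sim$, and quasi $\mb C$-density in $X$ is $\sim$-invariant, so the particular factorization chosen is immaterial.)

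Since $e(1_E)\in\cm$ and $e\in{}^{\APLdown}\cm$, the relation $e\APLdown e(1_E)$ holds, so applying it to the triangle above produces a morphism $w: X\to e(E)$ with $e(1_E)\,w\sim 1_X$. Next I would observe that $e(1_E)\,w$ factors through $e(1_E)$, whence $\la e(1_E)w\ra\su\la e(1_E)\ra$. As $\la 1_X\ra$ is the maximal sieve on $X$, and hence the top element of the preorder on $\cm/X$, the chain $\la 1_X\ra=\la e(1_E)w\ra\su\la e(1_E)\ra\su\la 1_X\ra$ forces $e(1_E)\sim 1_X$.

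It then remains only to transfer this to the closure. The extension property gives $e(1_E)\le\mb{c}_X(e(1_E))$, while maximality of $1_X$ gives $\mb{c}_X(e(1_E))\le 1_X$; reading off $1_X\sim e(1_E)\le\mb{c}_X(e(1_E))\le 1_X$ yields $\mb{c}_X(e(1_E))\sim 1_X$. This is exactly the statement that $e$ is quasi $\mb C$-dense, i.e. $e\in\ce^{QC}$, completing the inclusion ${}^{\APLdown}\cm\su\ce^{QC}$.

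The argument is wholly formal: it invokes neither idempotency nor weak heredity of $\mb C$, nor any structural hypothesis on $\ce$ or $\cm$ beyond the standing assumptions that identities lie in $\cm$ and that $\cx$ has quasi right $\cm$-factorizations. Accordingly the only mild obstacle is bookkeeping---ensuring that the triangle one constructs carries the orientation demanded by the definition of $\APLdown$, and confirming that $1_X$ is genuinely maximal for the sieve-preorder, so that the extension property by itself drives the closure all the way up to $1_X$.
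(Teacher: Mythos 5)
Your proof is correct and is essentially the paper's own argument: both take the quasi right $\cm$-factorization of $e$ and apply the lifting property $e \APLdown e(1_E)$ to that very triangle, obtaining $w$ with $e(1_E)\,w \sim 1_X$ and hence $1_X \leq \mb{c}_X(e(1_E))$, so $e \in \ce^{QC}$. Your intermediate step deducing $e(1_E) \sim 1_X$ before invoking the extension property is a harmless (and valid) slight strengthening of the paper's more direct conclusion, not a different method.
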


 \begin{proof}
	Let $e \in {}^{\APLdown}\cm$. Since $e \APLdown m_e$, $1 \leqslant m_e$. Thus $1 \leqslant \mb{c}(m_e)$.
\end{proof}

 Let $\ce$ and $\cm$ be classes of morphisms in $\cx$. We say that $\ce$ and
$\cm$ are closed under composition with isomorphisms

 (i) if $\alpha \in
\text{Iso}(\cx)$ and $e \in \ce$, and $\alpha e$ exists, then $\alpha e \in
\ce$;

 (ii) if $\alpha \in \text{Iso}(\cx)$ and $m \in \cm$, and $m \alpha$ exists,
then $m \alpha \in \cm$.

 \begin{proposition}\label{quasi mono and factorization}
	Let $\ce$ and $\cm$ be classes of morphisms in $\cx$ and for each
	morphism $f$ there exist $m \in \cm$ and $e \in \ce$ such that $f = m e$
	and $\cm \su QM(\cx)$.
	\begin{itemize}
		\itemi If $\ce$ is closed under composition with
		isomorphisms, then ${}^{\APLdown}\cm \su \ce$.
		
		\itemii If $\ce^{\APLup} \su QM(\cx)$ and $\cm$ is closed under composition
		with isomorphisms, then $\ce^{\APLup} \su \cm$.
	\end{itemize}
\end{proposition}

 \begin{proof}
	Consider the factorization $\xymatrix{X \ar[r]^{\hs{-5}f} & Y = X
		\ar[r]^{\hs{4}e} & M \ar[r]^{m} & Y}$ of $f$ where $m \in \cm$ and $e
		\in \ce$.
	
	(i) If $f \in {}^{\APLdown}\cm$, then there
	exists$\xymatrix{w:Y \ar[r] & M}$such that $m w \sim 1_Y$. Thus, $m$ is a
	split epi. Since $m$ is a quasi mono, by \cref{l:quasi mono split epi is
	iso}(a) $m\in \iso(\cx)$ and hence $f \in \ce$.
	
	(ii) If $f \in \ce^{\APLup}$, then by \cref{p:quasi mono}(b), $e$ is a quasi
	mono and there 	exists$\xymatrix{w:M \ar[r] & X}$such that $f w \sim m$.
	Thus, $mew \sim m$. Since $m$ is a quasi mono, by \cref{p:quasi mono}(a) $e
	w \sim 1_M$ and hence $e$ is a split epi. Therefore by \cref{l:quasi mono
	split epi is iso}(a) we have $e \in \iso(\cx)$ and hence $f \in \cm$.
	\end{proof}

In the following definition $\cx$ need not have quasi right $\cm$-factorizations.

 \begin{definition}
	A {\it quasi factorization structure} in a category $\cx$ is a pair $(\ce, \cm)$ of classes of morphisms such that;
	
	(a) every morphism $f$ has a factorization as$$\xymatrix{X \ar[r]^{\hs{-4}f}
	& Y = X \ar[r]^{\hs{2}e} & M \ar[r]^m & 
	Y,}$$ where $e\in \ce$ and $m \in
	\cm$;
	
	(b) $\ce \subseteq {}^{\APLdown}\cm$ and $\cm \subseteq \ce^{\APLup}$.
\end{definition}

 \begin{remark} (i)	If $(\ce, \cm)$ is a weak factorization structure in a category $\cx$, then
it is a quasi factorization structure.

 Suppose that $(\ce, \cm)$ is a quasi factorization structure in $\cx$ and $\cm
\su QM(\cx)$. By \cref{quasi mono and factorization} we have:

 (ii) $\ce \cap \cm \su \iso(\cx)$. To show this, let $f \in \ce \cap \cm$
and $$\xymatrix{X \ar[r]^{\hs{-4}f}
	& Y = X \ar[r]^{\hs{2}e} & M \ar[r]^m & Y}$$
where $e\in \ce$ and $m \in
\cm$. Since $f \in \ce \subseteq {}^{\APLdown}\cm$, there exists$\xymatrix{Y
\ar[r]^{w} & M}$ such that $m w \sim 1_Y$ and hence $m \in \iso(\cx)$. On the other
hand, $f \in \cm \subseteq \ce^{\APLup}$ and so there exists$\xymatrix{w':M
\ar[r] & X}$such that $f w' \sim m \sim 1_M$. Therefore $f$ is a split epi and
hence $f$ is an isomorphism.

 (iii) If $\ce$ is closed under composition with
isomorphisms, then $\ce = {}^{\APLdown}\cm$ and so by \cref{P:left right
triangle}(ii) $\text{Ret}(\cx)\su \ce$. Moreover, if $\cm$ has $\cx$-pullbacks,
then by \cref{P:left right triangle}(iv) $\ce$ is closed under composition.

 (iv) If $\ce^{\APLup} \su QM(\cx)$ and $\cm$ is closed under composition
with isomorphisms, then $\cm = \ce^{\APLup}$. Also if $\ce \su QE(\cx)$, then
by \cref{P:left right triangle}(iii) $\text{Sec}(\cx) \su \cm$. Moreover, if
$\ce$ has $\cx$-pushouts, then \cref{P:left right triangle}(iv) $\cm$ is closed
under composition
\end{remark}

 In the following example $(\ce, \cm)$ is a quasi factorization structure which
is not a weak factorization structure.

 \begin{example}
	(1) Let $\c$ be a closed model category whose objects are cofibrant. The
	pair
	$(\ce, \cm)$ of morphisms in $\c$, where $\ce$ is the class of weak
	equivalences
	and $\cm$ is the class of fibrations form a quasi factorization
	structure. To prove this, first note that every morphism $f$ in $\c$ has a
	factorization $f = p j$, where $j$ is a trivial cofibration and $p$ is a
	fibration, \cite[Definition 7.1.3]{h}. Now assume $\hs{-1}\xymatrix{E
	\ar[r]^{e} & X}\hs{-1} \in \ce$ and
	$\hs{-1}\xymatrix{M \ar[r]^{m} & X}\hs{-1}\in \cm$ and let $e = mu$ for
	some $u \in \c$. By \cite[Proposition 7.2.6]{h},$\xymatrix{E
	\ar[r]^{\hs{-4}e} & X = E \ar[r]^{\hs{2}i} & W \ar[r]^{p} & X,}$ where
	$i$ is a trivial cofibration and $p$ is a trivial fibration. Thus,
	\cite[Definition 7.1.3]{h} implies that there exists$\xymatrix{W \ar[r]^d &
	M}$such that $p =m d$. By \cite[Proposition 7.6.11]{h} there exists a
	morphism$\xymatrix{X \ar[r]^s & W}$such that $p s = 1_X$. Put $w = d s$, so
	$m w = 1_X$. Therefore $\ce \subseteq {}^{\APLdown}\cm$. Similarly, we can
	show that $\cm \subseteq \ce^{\APLup}$. Since $\ce \cap \cm \nsubseteq
	\iso{\cx}$, the system is not weak.
	
(2) As a special case of (1), in the category $\mathbf{Top}$, in which all the
objects are cofibrant, the collections $\ce$ of homotopy equivalences and $\cm$
of Serre fibrations form a quasi factorization structure.

(3) In Example~\ref{examp} (11) above, let $\ce=\{\hat{e_f}: \hat{f}\in
Set_\mathbb{P}\}$, where $e_f = \eta_{I_{f}} f'$. Then $(\ce, \cm)$ is a quasi
factorization structure. To show $(\ce, \cm)$ is not a weak factorization
structure, let $ X=\{x, x', x'' \} $ and consider the map $\hs{-1}\xymatrix{f:X
\ar[r]^{} & P(X)}\hs{-1} $ taking all the points to $ \{x\}$. Let $f =
\hat{m_{h}} \hat{k}$ be an $(\ce, \cm)$ factorization of $ \hat{f} $. As proved
in \cite{mh11} we can see the following commutative diagram has no diagonal.

	\centerline{\sqdg{X}{I_{g}}{I_{h}}{X}{\hat{u}}{\hat{k}}{\hat{m_{g}}}{\hat{v} \hat{m_{h}}}{///}}

(4)
	Let $\cx$ be a category with binary products in which projections are split epis. Let $\ce=Sec$ and $\cm=Ret$, where $Sec$ and $Ret$ denote the collection of all sections and split epis, respectively. Then $(\ce,\cm)$ is a quasi factorization structure. To show $(Sec,Ret)$ is not generally a weak factorization structure, let $Top-\{\emptyset\}$ be the full subcategory of $\mb{Top}$ consisting of the non-empty topological spaces and
	consider the following commutative diagram,

	\begin{center}
		\sqdg{\{0\}}{\{0,1,2\}}{\{0,1\}}{\{0,1\}}{u}{s}{r}{\tau}{///}
	\end{center}
	where $u$ sends 0 to 1, with codomain having $\{1\}$ open; $s$ is the inclusion with codomain having $\{1\}$ open; $r$ sends 0 to 0 and sends 1 and 2 to 1 with codomain having indiscrete topology; and $\tau$ is the twist map.
	It is easy to see that $s$ is a section and $r$ is a split epi. The square has no diagonal, since otherwise if $d$ is a diagonal, then $ds=u$ and $rd=\tau$. It follows that $d(0)=1$ and $d(1)=0$. Since $d^{-1}(\{1\})=\{0\}$, $d$ is not continuous.
	
(5)	Let $\cx$ be a category with coproducts,
	$$\ce=\{\xymatrix{A \ar[r]^{\hs{-3}\nu_{1}} & {A\amalg B} } \mid \nu_{1} \hs{1}\hbox{\small is the coproduct inclusion to the first factor}\}$$
	and $ \cm $ be any collection of split epis. Then $(\ce,\cm)$ is a quasi
	factorization structure. Since $\iso{(\cx)} \nsubseteq \ce$, the system is
	not weak.
	
(6) Let $\cx$ be an abelian category. Define $\ce=\{\xymatrix{\hs{-1}A \ar[r]^{ \hs{-4}\langle
		0,f\rangle} & {A\times B} } \mid \xymatrix{A \ar[r]^{f} & B} \in \cx\}$ and $\cm=\{\hs{-1}\xymatrix
{A\times B\ar[r]^{\hs{3}\pi_2} & B} \mid \pi_2 \hbox{ is the second projection}\}$.
Then $(\ce,\cm)$ is a quasi factorization structure. Since $\iso{(\cx)}
	\nsubseteq \cm$, the system is not weak.
\end{example}

 \begin{theorem}
	Suppose that $\cx$ has a quasi right $\cm$-factorizations such that $\cm$ is
	closed under composition, $\cm \su
	QM(\cx)$ and for each $f \in QM(\cx)$, $f$ is an isomorphism whenever $m_f$
	is an isomorphism. Then there is a class $\ce$ such that $(\ce, \cm)$ is a
	quasi factorization structure in $\cx$.
\end{theorem}

 \begin{proof}
	Let $\ce$ be the class ${}^{\APLdown}\cm$ and$\xymatrix{E
		\ar[r]^{\hs{-4}f} & Y = E \ar[r]^{\hs{2}e} & X \ar[r]^{m_f} & Y}$be a
		quasi right $\cm$-factorization of an arbitrary morphism $f$ in $\cx$.
		We show that $e \in \ce$. For this reason let the following commutative
		triangle 	
		$$\xymatrix{E \ar[rr]^{u} \ar[dr]_{e} \ar @{}
		[dr] |{\hs{14}///} & & M \ar[dl]^{m}\\
			& X &} $$
	such that $m \in \cm$ be given. Thus $f = m_f e = m_f m u$ and so there exists a
	morphism$\xymatrix{X \ar[r]^{d} & M}$such that $m_f m d = m_f$. Therefore
	$\la m d \ra = \la 1_M \ra$ and hence $m$ is a split epi. Since $m$ is a
	quasi mono, $m$ is an isomorphism. This implies that $e \in
	{}^{\APLdown}\cm = \ce$. Now we prove that $\cm \su \ce^{\APLup}$. Let $m \in
	\cm$ and the following commutative triangle
	$$\xymatrix{ & M \ar[dl] _{e} \ar[dr] ^{m} \ar @{} [dr] |{\hs{-14}///}\\
		E \ar[rr]_{v} & & X}$$
	such that $e \in \ce$ be given. Thus, by \cref{p:quasi mono}(b) we have $e \in
	QM(\cx)$. Let$\xymatrix{M \ar[r]^{\hs{-4}e} & E = M \ar[r]^{\hs{2}e'} & M_1
	\ar[r]^{m_e} & E}$be a
	quasi right $\cm$-factorization of $e$. Therefore there exists a
	morphism$\xymatrix{E \ar[r]^{w} & M_1}$ such that $m_e w \sim 1_E$ and
	hence $m_e$ is a split epi. Since $m_e \in QM(\cx)$, $m_e$ is an
	isomorphism and so by hypothesis we have $e$ is an isomorphism. Let
	$e^{-1}$ be the inverse of $e$, so $m e^{-1} = v$. Thus, $m \in
	\ce^{\APLup}$.
	\end{proof}
\begin{theorem}\label{qfs imples qr}
	Suppose that $(\ce, \cm)$ is a quasi factorization structure in $\cx$.
	\begin{itemize}
		\itema If $\cm$ has $\cx$-pullbacks, then $\cx$ has a quasi right $\cm$-factorization structure.
		
		\itemb If $\cm \su \mon(\cx)$ and $\ce$ has $\cx$-pushouts, then $\cx$ has a
		quasi left $\ce$-factorization structure.
	\end{itemize}
\end{theorem}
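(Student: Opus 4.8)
The plan is to use the given $(\ce,\cm)$-factorization itself as the candidate (co)factorization in each case, and to verify the required universality via the weak diagonal conditions defining $\ce = {}^{\APLdown}\cm$ and $\cm = \ce^{\APLup}$. Concretely, for an arbitrary morphism $\xymatrix{f: Y \ar[r] & X}$ I fix a factorization $f = m e$ with $e \in \ce$ and $m \in \cm$, guaranteed by axiom (a) of a quasi factorization structure. For (a) I will show that $m$ is a quasi right part of $f$, and for (b) that $e$ is a quasi left part of $f$. In both cases property (a$'$) of the relevant factorization definition is immediate, since $f = me$ gives $\la f \ra \su \la m \ra$ and $\ra f \la \su \ra e \la$; so all the work lies in the minimality condition (b$'$).

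For part (a), suppose $m' \in \cm/X$ satisfies $\la f \ra \su \la m' \ra$, so that $f = m' h$ for some $h$; I must deduce $\la m \ra \su \la m' \ra$. The idea is to convert the resulting commutative square (with sides $e,h,m,m'$) into a triangle over $M$ to which $e \APLdown (-)$ applies. I would pull $m'$ back along $m$, obtaining $\xymatrix{p: P \ar[r] & M}$ in $\cm$ (here the hypothesis that $\cm$ has $\cx$-pullbacks is used) together with $\xymatrix{p': P \ar[r] & M'}$ satisfying $mp = m'p'$. Since $me = m'h$, the universal property yields $\xymatrix{t: Y \ar[r] & P}$ with $pt = e$. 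Now $e$ and $p$ share codomain $M$, with $e \in \ce = {}^{\APLdown}\cm$ and $p \in \cm$, so $e \APLdown p$ applied to the triangle $pt = e$ produces $\xymatrix{w: M \ar[r] & P}$ with $pw \sim 1_M$. From $1_M \le pw$ I get $1_M = pwk$ for some $k$, whence $m = mpwk = m'p'wk$, i.e. $m \in \la m' \ra$, as desired.

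Part (b) is the formal dual. Suppose $e' \in Y/\ce$ satisfies $\ra f \la \su \ra e' \la$, so $f = g' e'$ for some $g'$; I must deduce $\ra e \la \su \ra e' \la$. I would push $e'$ out along $e$, obtaining $\xymatrix{q': M \ar[r] & Q}$ in $\ce$ (using that $\ce$ has $\cx$-pushouts) together with $\xymatrix{q: M' \ar[r] & Q}$ satisfying $q'e = qe'$. From $me = g'e'$ the universal property gives $\xymatrix{s: Q \ar[r] & X}$ with $sq' = m$. Now $q'$ and $m$ share domain $M$, with $q' \in \ce$ and $m \in \cm = \ce^{\APLup}$, so $q' \APLup m$ applied to the triangle $sq' = m$ yields $\xymatrix{w: Q \ar[r] & M}$ with $\la mw \ra = \la s \ra$. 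From $s \le mw$ I write $s = mwb$; then $m = sq' = mwbq'$, and here the hypothesis $\cm \su Mon(\cx)$ lets me cancel $m$ to get $1_M = wbq'$. Finally $e = wbq'e = wbqe' = (wbq)e'$, so $\ra e \la \su \ra e' \la$.

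The main obstacle, and the reason the pullback/pushout and monomorphism hypotheses enter, is that the orthogonality relations $\APLdown$ and $\APLup$ are formulated for triangles whose two legs share a common codomain (respectively domain), whereas two competing factorizations of $f$ naturally produce a commutative square. The pullback in (a) and the pushout in (b) are exactly the devices that replace that square by an admissible triangle over $M$ (resp. under $M$), matching the (co)domain so that the defining lifting property can be invoked; the monomorphism assumption in (b) is then needed for the final cancellation that turns the weak lift into an honest factorization of $e$ through $e'$.
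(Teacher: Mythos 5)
Your proposal is correct and takes essentially the same approach as the paper's proof: in (a) you pull the competing $\cm$-morphism back along $m$ and apply the lifting $e \APLdown m^{-1}(m')$, upgrading $pw \sim 1_M$ to an honest section exactly as the paper does with its morphism $\alpha$, and in (b) you push the competing $\ce$-morphism out along $e$, apply $q' \APLup m$, and use $\cm \su Mon(\cx)$ to cancel $m$, mirroring the paper's argument with $\beta$ and $d' = w'\beta e'$. The only difference is cosmetic: you cancel the monomorphism to obtain $wbq' = 1_M$ before composing with $e$, whereas the paper cancels $m_f$ only in the final equation $m_f d' e = m_f e_f$.
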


 \begin{proof}
	(a) Let$\xymatrix{f:X \ar[r] & Y}$be a morphism in $\cx$ and consider the quasi factorization of $f$ as$$\xymatrix{X
	\ar[r]^{\hs{-4}f} & Y = X \ar[r]^{\hs{2}e_f} & M \ar[r]^{m_f} & Y,}$$ where
	$e_f\in \ce$ and $m_f \in \cm$. Suppose the unbroken square,
	$$\xymatrix{X \ar[r]^{u} \ar[d]_{e_f} & N \ar[d]^{n}\\
		M \ar@<1ex> @{.>}[ur]^{d} \ar[r]_{\hs{2} m_f} \ar @{} [ur] |{\hs{6}///} & Y} $$
	is commutative, where $n \in \cm$. So there is a morphism $t$ such that the triangles in the following diagram commute.
	$$\xymatrix{X \ar@{} [ddr] |{_{\hs{-5}///}} \ar@{} [drr] |{_{\hs{5}///}}
		\ar@/_2pc/[ddr]_{e_f} \ar@/^2pc/[drr]^{u}
		\ar@{.>}[dr]|{^{\exists ! t}} & \\
		& m_{f}^{-1}(N) \ar [d]_{m_{f}^{-1}(n)} \ar@{} [dr] |{p.b.} \ar[r]^{\hs{3}m'}
		& N \ar [d]^{n} & \\
		& M \ar[r]_{m_f} & Y } $$
	Since $m_{f}^{-1}(n)\in \cm$ and $\ce \subseteq {}^{\APLdown}\cm$, there exists $\hs{-1}\xymatrix{w: M \ar[r] & m_{f}^{-1}(N)}\hs{-1}$ such that $m_{f}^{-1}(n) w \sim 1_M$. Thus there exists a morphism $\hs{-1}\xymatrix{\alpha: M \ar[r] & M}\hs{-1}$ such that $m_{f}^{-1}(n) w \alpha = 1_M$. Now define $d :\stackrel{\text{\tiny def}}{=} m' w \alpha$. So we have $n d = n m' w \alpha = m_f m_{f}^{-1}(n) w \alpha = m_f$.
	
	(b) Consider the quasi factorization of $f$ as$$\xymatrix{X
	\ar[r]^{\hs{-4}f} & Y = X \ar[r]^{\hs{2}e_f} & M \ar[r]^{m_f} & Y,}$$ where
	$e_f\in \ce$ and $m_f \in \cm$. Suppose the unbroken square,
	$$\xymatrix{X \ar[r]^{e} \ar[d]_{e_f} \ar @{} [dr] |{\hs{-4}///} & E \ar@<1ex> @{.>}[dl]^{d'} \ar[d]^{v}\\
		M \ar[r]_{m_f} & Y} $$
	is commutative, where $e \in \ce$. So there exists a morphism $t'$ such that the triangles in the following diagram commute.
	$$\xymatrix{ X \ar [d]_{e_f} \ar@{} [dr] |{p.o.} \ar[r]^{e} & E \ar [d]^{e'} \ar@/^2pc/[ddr]^{v} & \\
		M \ar@{} [dr] |{_{\hs{3}///}} \ar@/_2pc/[drr]_{m_f} \ar[r]_{e''} & E' \ar@{.>}[dr]|{^{\exists ! t'}} & \\
		&	& Y & }$$
	Since $e''\in \ce$ and $\cm \subseteq \ce^{\APLup}$, there exists $\hs{-1}\xymatrix{w': E' \ar[r] & M}\hs{-1}$ such that $m_f w' \sim t'$. Thus there exists $\hs{-1}\xymatrix{\beta: E' \ar[r] & E'}\hs{-1}$ such that $m_f w' \beta = t'$. Now define $d' :\stackrel{\text{\tiny def}}{=} w' \beta e'$. So we have $m_f w' \beta e' e = m_f w' \beta e'' e_f = t' e'' e_f = m_f e_f$. Therefore $d'e = e_f$.
\end{proof}

Calling $\cm$, {\it $\sim$-closed}, if whenever $m\in \cm$ and $f \sim m$, then $f\in \cm$, we have:

 \begin{corollary}\label{C:quasi}
	Suppose that $\cx$ has a quasi right $\cm$-factorization and $\cm \su QM(\cx)$ is closed under composition and is $\sim$-closed, the closure operator $\mb C$ is quasi weakly hereditary and quasi idempotent, and (QCD) holds for every $X\in \cx$. If $\ce^{ QC} \su QE(\cx)$ and $\ce^{QC} \su {}^{\ultqd}\hs{-2}\cm$, then $(\ce^{ QC}, \cm^{ QC})$ is a quasi factorization structure in $\cx$.
\end{corollary}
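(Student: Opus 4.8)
The plan is to verify the two defining clauses of a quasi factorization structure for the pair $(\ce^{QC}, \cm^{QC})$: clause (a), the existence of factorizations, and clause (b), the two class-equalities $\ce^{QC} = {}^{\APLdown}(\cm^{QC})$ and $\cm^{QC} = (\ce^{QC})^{\APLup}$. For (a) I would invoke the construction underlying Theorem~\ref{T:weakly hereditary left}, whose hypotheses (quasi weak heredity, (QCD), $\ce^{QC} \su QE(\cx)$ and $\ce^{QC} \su {}^{\ultqd}\hs{-2}\cm$) are all assumed here. Given $f : X \to Y$, take its quasi right $\cm$-factorization $f = m e$ and write $m = \mb{c}_Y(m)\, j_m$. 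That construction shows $j_m e \in \ce^{QC}$, while $\mb{c}_Y(m)$ lies in $\cm$ and, by quasi idempotence, satisfies $\mb{c}_Y(\mb{c}_Y(m)) \sim \mb{c}_Y(m)$, so $\mb{c}_Y(m) \in \cm^{QC}$. Hence $f = \mb{c}_Y(m)\,(j_m e)$ is an $(\ce^{QC}, \cm^{QC})$-factorization.

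For the two ``forward'' inclusions in (b), both are already on record: $\ce^{QC} \su {}^{\APLdown}(\cm^{QC})$ is Proposition~\ref{P:left triangle} (using $\cm \su QM(\cx)$ closed under composition and $\mb C$ quasi idempotent), and $\cm^{QC} \su (\ce^{QC})^{\APLup}$ is Proposition~\ref{P:right triangle}. So only the two reverse inclusions remain, and this is where the factorization (a) and the weak diagonal fill-ins do the work.

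For ${}^{\APLdown}(\cm^{QC}) \su \ce^{QC}$, I would take $e : X \to M$ in ${}^{\APLdown}(\cm^{QC})$ and factor it as in (a), so $e = \mb{c}_M(e(1_X))\, d$ with $\mb{c}_M(e(1_X)) \in \cm^{QC}$ and $d \in \ce^{QC}$. Applying $e \APLdown \mb{c}_M(e(1_X))$ to the commuting triangle $\mb{c}_M(e(1_X))\, d = e$ produces $w$ with $\mb{c}_M(e(1_X))\, w \sim 1_M$. Since right-composition only shrinks sieves, $\la 1_M \ra = \la \mb{c}_M(e(1_X))\, w \ra \su \la \mb{c}_M(e(1_X)) \ra \su \la 1_M \ra$, forcing $\mb{c}_M(e(1_X)) \sim 1_M$, which is exactly the statement that $e$ is quasi $\mb C$-dense, i.e. $e \in \ce^{QC}$.

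For $(\ce^{QC})^{\APLup} \su \cm^{QC}$, I would take $n$ in $(\ce^{QC})^{\APLup}$, factor $n = m e$ via (a) with $e \in \ce^{QC}$ and $m \in \cm^{QC}$, and apply $e \APLup n$ to the triangle with $v = m$, obtaining $w$ with $\la n w \ra = \la m \ra$. Because $n = m e$ gives $\la n \ra \su \la m \ra$, while $n w$ factors through $n$ gives $\la m \ra = \la n w \ra \su \la n \ra$, I conclude $n \sim m$; then $\sim$-closedness of $\cm$ promotes this to $n \in \cm$, and monotonicity of $\mb C$ together with $m \in \cm^{QC}$ gives $\mb{c}(n) \sim \mb{c}(m) \sim m \sim n$, so $n \in \cm^{QC}$. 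I expect the two reverse inclusions to be the crux: the main obstacle is the careful sieve bookkeeping that reads off $\mb{c}_M(e(1_X)) \sim 1_M$ and $n \sim m$ from the weak diagonals, and recognizing precisely where the hypothesis that $\cm$ is $\sim$-closed (absent from the feeder Propositions~\ref{P:left triangle} and~\ref{P:right triangle}) is indispensable, namely to turn the sieve-equivalence $n \sim m$ into genuine membership $n \in \cm$.
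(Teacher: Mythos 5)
Your proof is correct and takes essentially the same route as the paper's: clause (a) via the closure-refined factorization $f = \mb{c}_Y(m)\,(j_m e)$ coming from Theorems~\ref{T:closed right} and~\ref{T:weakly hereditary left}, the forward inclusions via Propositions~\ref{P:left triangle} and~\ref{P:right triangle}, and the two reverse inclusions by the same weak-diagonal-plus-sieve arguments (factoring the given morphism, applying $\APLdown$ to the closure $\mb{c}_T(a(1_A))$ to force density, and applying $\APLup$ with $v$ the $\cm^{QC}$-part to force $b \sim m_1$). Your explicit bookkeeping of where $\sim$-closedness is indispensable --- promoting $b \sim m_1$ to $b \in \cm$ so that $b \in \cm^{QC}$ even makes sense --- is in fact spelled out more carefully than in the paper, which uses that hypothesis only implicitly in its final step.
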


 \begin{proof}
	Let $f = m_f e_f$ be a quasi right $\cm$-factorization of $f$.
	By Theorems~\ref{T:closed right} and~\ref{T:weakly hereditary left}, the factorization $f = \mb{c}_{Y}(m_f) (j _{m_f} e_{f})$ is both quasi right $\cm^{ QC}$-factorization and quasi left $\ce^{QC}$-factorization of $f$. By Propositions~\ref{P:left triangle} and~\ref{P:right triangle} $(\ce^{ QC}, \cm^{ QC})$ is a quasi factorization structure in $\cx$.
\end{proof}

In \cite{dt}, it is proved that if the category $ \cx$ has $(\ce,\cm)$-factorization structures and $\mb C$ is a closure operator on $ \cx$, then $\mb C$ is idempotent and weakly hereditary iff $\cx$ has $(\ce^{ C}, \cm^{ C})$-factorizations.
In the following we prove a similar result under weaker conditions.

\begin{theorem}
	Suppose that $\cm \su QM(\cx)$ has $\cx$-pullbacks, is closed under composition and $(\ce, \cm)$ is a quasi factorization structure.
	\begin{itemize}
		\itemi Let $\cm$ be closed under composition with isomorphisms and the closure operator $\mb C$ be quasi weakly hereditary and quasi idempotent, and (QCD) holds for every $X\in \cx$. Then $(\ce^{ QC}, \cm^{ QC})$ is a quasi factorization structure if and only if $\cm^{ QC} \su ({}^{\APLdown}(\cm^{QC}))^{\APLup}$.
		
		\itemii If $(\ce^{ QC}, \cm^{ QC})$ is a quasi factorization structure, then $\cm^{ QC}$ is closed under composition and $\mb C$ satisfies the property (QCD).
	\end{itemize}
\end{theorem}

 \begin{proof}
(i)	Let $f = m e$ be a quasi factorization of $f$, where $e\in \ce$ and $m \in \cm$. By Propositions \ref{qfs imples qr} and \ref{P:quasi left is quasi dense}, $e \in \ce^{QC}$. Since $\mb C$ is quasi weakly hereditary, there exists a quasi $\mb C$-dense morphism $\hs{-1}\xymatrix{j_m : M \ar[r] & \mb{c}_{Y}(M)}\hs{-1}$ such that $\mb{c}_{Y}(m) j_m = m$. Also $$\hs{-1}\xymatrix{X \ar[r]^{\hs{-4}f} & Y = X \ar[r]^{j_m e} & \mb{c}_{Y}(M) \ar[r]^{\hs{2}\mb{c}_{Y}(m)} & Y,}\hs{-1}$$ where $\mb{c}_{Y}(m) \in \cm^{ QC}$. Put $d :\stackrel{\text{\tiny def}}= j_m e$. Since  $\ce^{ QC}$ is closed under composition, $d \in \ce^{ QC}$. Thus every morphism $f$ has a factorization such that its left part is in $\ce^{ QC}$ and its right part is in $\cm^{ QC}$. Now we show that $\ce^{QC} = {}^{\APLdown}(\cm^{QC})$. By \Cref{P:left triangle}, $\ce^{QC} \su {}^{\APLdown}(\cm^{QC})$. Let $h \in{}^{\APLdown}(\cm^{QC})$. Thus there exist morphisms $e_1 \in \ce^{QC}$ and $n_1 \in \cm^{QC}$ such that
$$\xymatrix{N \ar[r]^{\hs{-6}h} & E = N \ar[r]^{\hs{3}e_1} & h(N) \ar[r]^{n_1} & E.}$$
Thus there is a morphism $w_1$ as in the following diagram
	$$\xymatrix{N \ar[rr]^{\hs{-3}e_1} \ar[dr]_{h} & & h(N) \ar[dl]_{\hs{4} n_1}\\
	& E \ar@/^-1pc/ @{.>}[ur]_{w_1} & }$$
and hence $n_1 w_1 \sim 1_E$. This implies that $n_1$ is a split epi and since $n_1 \in \cm$, $n_1$ is an isomorphism. Thus by \Cref{QD is closed under comp. with iso}(a), $h \in \ce^{QC}$ and hence
\begin{equation}\label{eq main}
\ce^{QC} = {}^{\APLdown}(\cm^{QC}).
\end{equation}
Now, if $(\ce^{ QC}, \cm^{ QC})$ is a quasi factorization structure, since $\cm^{ QC} \su (\ce^{QC})^{\APLup}$ by equality (6), $\cm^{ QC} \su ({}^{\APLdown}(\cm^{QC}))^{\APLup}$. Converely, 
if $\cm^{ QC} \su ({}^{\APLdown}(\cm^{QC}))^{\APLup}$, then by equality (6), $\cm^{ QC} \su (\ce^{QC})^{\APLup}$ and so $(\ce^{ QC}, \cm^{ QC})$ is a quasi factorization structure.

 (ii) Consider morphisms$\xymatrix{M \ar[r]^{m_1} & X \ar[r]^{m_2} & Y}$ such that $m_1, m_2 \in \cm^{QC}$ and $$\xymatrix{M \ar[rr]^{\hs{-4}m_2 m_1} & & Y = M \ar[r]^{\hs{5}e} & N \ar[r]^{n} & Y}$$
be a $(\ce^{ QC}, \cm^{ QC})$-factorization of $m_2 m_1$. Consider the following pullback diagram
	$$\xymatrix{M \ar@{} [ddr] |{_{\hs{-5}///}} \ar@{} [drr] |{_{\hs{5}///}}
	\ar@/_2pc/[ddr]_{e} \ar@/^2pc/[drr]^{m_1}
	\ar@{.>}[dr]|{^{\exists ! s}} & \\
	& L \ar [d]_{m_2^*} \ar@{} [dr] |{p.b.} \ar[r]^{n^*}
	& X \ar [d]^{m_2} & \\
	& N \ar[r]_{n} & Y } $$
\Cref{mqc closed under compo} implies that $m_2^* \in \cm^{ QC}$ and since $\ce^{QC} \su {}^{\APLdown}(\cm^{QC})$, there exists$\xymatrix{s_1: N \ar[r] & L}$such that $m_2^* s_1 \sim 1_N$. Therefore $m_2^*$ is a split epi and since $m_2^* \in \cm$, $m_2^*$ is an isomorphism. Also $n^* (m_2^*)^{-1} e = n^* (m_2^*)^{-1} m_2^* s = n^* s = m_1$ and since $\cm^{ QC} \su (\ce^{QC})^{\APLup}$, we have the following diagram
$$\xymatrix{& M \ar[dl]^{e} \ar[dr]^{m_1} \\
	N \ar@/_-1pc/ @{.>}[ur]^{w_1'} \ar[rr]_{n^* (m_2^*)^{-1}} & & X}$$
such that $m_1 w_1' \sim n^* (m_2^*)^{-1}$ and hence there exists$\xymatrix{l: N \ar[r] & N}$such that $m_1 w_1' l = n^* (m_2^*)^{-1}$. Thus $m_2 m_1 (w_1' l) = m_2 n^* (m_2^*)^{-1} = n$ and hence $n \leq m_2 m_1$. Since $m_2 m_1 = n e$, $m_2 m_1 \leq n$ and so $m_2 m_1 \sim n$. Therefore $m_2 m_1 \in \cm^{QC}$. 

Now we prove that $\mb C$ satisfies (QCD). Let$\xymatrix{X \ar[r]^{e_1} & Y \ar[r]^{e_2} & Z}$ be morphisms such that $e_1, e_2 \in \ce^{QC}$. We show that $e_2 e_1 \in {}^{\APLdown}(\cm^{QC})$. Let$\xymatrix{X \ar[r]^{e_1} & Y \ar[r]^{\hs{-5}e_2} & Z = X \ar[r]^{\hs{2}u} & M \ar[r]^m & Z}$such that $m \in \cm^{ QC}$. Consider the following pullback diagram
	$$\xymatrix{X \ar@{} [ddr] |{_{\hs{-5}///}} \ar@{} [drr] |{_{\hs{5}///}}
	\ar@/_2pc/[ddr]_{e_1} \ar@/^2pc/[drr]^{u}
	\ar@{.>}[dr]|{^{\exists ! t}} & \\
	& K \ar [d]_{m^*} \ar@{} [dr] |{p.b.} \ar[r]^{k}
	& M \ar [d]^{m} & \\
	& Y \ar[r]_{e_2} & Z } $$
Thus $m^* \in \cm^{QC}$ and since $\ce^{QC} \su {}^{\APLdown}(\cm^{QC})$, there exists$\xymatrix{t_1: Y \ar[r] & K}$ such that $m^* t_1 \sim 1_Y$. Therefore $m^*$ is a split epi and since $m^* \in \cm$, $m^*$ is an isomorphism. Thus we have the following diagram
$$\xymatrix{Y \ar[rr]^{\hs{-3}k (m^*)^{-1}} \ar[dr]_{e_2} & & M \ar[dl]_{\hs{4} m}\\
	& Z \ar@/^-1pc/ @{.>}[ur]_{t_2} & }$$
such that $m t_2 \sim 1_Z$. Thus $e_2 e_1 \in {}^{\APLdown}(\cm^{QC})$. It is easy to see that $\cm^{QC}$ is closed under composition with isomorphisms and since $\cm^{QC}$ has $\cx$-pullbacks, by \Cref{qfs imples qr}(a), $\cx$ has quasi right $\cm^{QC}$-factorization. So by \Cref{QD is closed under comp. with iso}(a), $\ce^{QC}$ is closed under composition with isomorphisms. Thus by \Cref{quasi mono and factorization}(i), ${}^{\APLdown}(\cm^{QC}) \su \ce^{QC}$ and hence $e_2 e_1 \in \ce^{QC}$. Therefore $\mb C$ satisfies the property (QCD).
\end{proof}

 \begin{definition}\label{strong quasi mono}
	(a) A nonempty class $\cm$ is called a {\it codomain} if $m\in \cm$ and $\ra m \la \su \ra a \la$, yields $a\in \cm$.
	
	(b) A morphism $f$ is called a {\it strongly quasi mono}, whenever for every morphisms $a, b \in \cx$ if $f a = f b$, then $\la a \ra = \la b \ra$ and $\ra a \la\hs{1} = \hs{1} \ra b \la$.
\end{definition}

 \begin{example}
	Let $ \cc $ be a subcategory of $\mb{Set}$ and $X $ be an object in $\mb{Set}$ which is not in $\cc$.
	Now define the subcategory $\cd$ of $\mb{Set}$ to have $obj (\cc) \cup \{X\}$ as objects and for all $ A, B \in \cd$ , $ Hom_{\cd}(A,B) =$
	
	$$\begin{cases}
	Hom_{\cc}(A,B) & A, B \in \cc \cr Hom_{Set}(A,B) & A=X, B\in \cc \cr \emptyset & A \in \cc , B=X \cr \{\hs{-1} \xymatrix{X \ar[r]^f & X}\hs{-1} \mid f \hbox{ is a bijective map } \}& A=X , B=X
	\end{cases} $$
	
	For all $ A \in \cd$, the morphisms $\hs{-1} \xymatrix{f:X \ar[r] & A}\hs{-1}$ are strongly quasi monos in $\cd$.
\end{example}

 \begin{notation}
	The class of all strongly quasi monos is denoted by $SQM(\cx)$.
\end{notation}
Note that $\mon(\cx) \su SQM(\cx)$.
\begin{remark}
	(a) If $m \in \mon(\cx)$ and $\ra m \la\hs{1} \su \hs{1} \ra a \la$, then
	$a \in \mon(\cx)$. Therefore $\mon(\cx)$ is a codomain.
	
	(b) If $\cm$ is a codomain, then it is closed under composition with isomorphisms on the left.
	
	(c) If $\cm$ is a codomain and $\hs{-1}\xymatrix{M \ar[r]^m & X}\hs{-1} \in \cm$, then $1_M \in \cm$. Note that for each $\hs{-1}\xymatrix{X \ar[r]^f & Y}\hs{-1}$ since $f = f 1_X$, we have $\ra f \la\hs{1} \su \hs{1} \ra 1_X \la\hs{1}$. Therefore if
	$$\{M \mid M \text{ is a domain of an element of } \cm \} = Obj(\cx),$$
	then $\cm$ contains all the identities.
\end{remark}

 \begin{theorem}
	Suppose that $\cm \su SQM(\cx)$ and it is a codomain. If $\mb C$ is a closure operator such that $(\ce^{ QC}, \cm^{ QC})$ is a quasi factorization structure, then $\mb C$ is quasi weakly hereditary and quasi idempotent.
\end{theorem}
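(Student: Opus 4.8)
The plan is to unwind the two conclusions into statements about the canonical closure factorization $m = \mb{c}_X(m)\, j_m$ (with $j_m : M \to \mb{c}_X(M)$ and $\mb{c}_X(m) : \mb{c}_X(M) \to X$) of an arbitrary $m \in \cm/X$, and then to play this factorization against an abstract $(\ce^{QC},\cm^{QC})$-factorization. Concretely, $\mb C$ is quasi weakly hereditary precisely when $j_m \in \ce^{QC}$ for every $m$, and $\mb C$ is quasi idempotent precisely when $\mb{c}_X(m) \in \cm^{QC}$ for every $m$ (the comparisons $m \leq \mb{c}_X(m)$ and $\mb{c}_X(m) \leq \mb{c}_X(\mb{c}_X(m))$ come for free from the extension property, so only the reverse comparisons are at stake). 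A first gain is immediate: since $\cm$ is a codomain and $m = \mb{c}_X(m)\, j_m \in \cm$ with $j_m$ the first factor, $j_m \in \cm$, and hence quasi weak hereditariness collapses to $\mb{c}_{\mb{c}_X(M)}(j_m) \sim 1_{\mb{c}_X(M)}$.

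Next I would fix a factorization $m = n e$ with $e \in \ce^{QC}$ and $n \in \cm^{QC}$ coming from part (a) of the quasi factorization structure, and compare it with $m = \mb{c}_X(m)\, j_m$. From $m \leq n$ and monotonicity, $\mb{c}_X(m) \leq \mb{c}_X(n) \sim n$ (as $n$ is quasi $\mb C$-closed), so $\mb{c}_X(m) \leq n$, say $\mb{c}_X(m) = n s'$; the codomain hypothesis then forces $s' \in \cm$ and, applied to $m = ne$, also $e \in \cm$. Cancelling the strongly quasi monomorphism $n$ in $n s' j_m = m = n e$ yields both $\la s' j_m \ra = \la e \ra$ and $\ra s' j_m \la = \ra e \la$. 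Since $e \in \ce^{QC}$ and $e \leq s' j_m$, Proposition~\ref{QD is closed under comp. with iso}(b) upgrades this to $s' j_m \in \ce^{QC}$, while $e \leq s'$ gives a triangle $e = s' q$, putting $s'$ in the position of the lower leg of an $\APLdown$-triangle against the dense map $e$.

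Running the analogous comparison on $\mb{c}_X(m)$ itself, i.e. factoring $\mb{c}_X(m) = n_1 e_1$ with $e_1 \in \ce^{QC} \cap \cm$ and $n_1 \in \cm^{QC}$, the continuity property gives the clean identity I would rely on: from $\mb{c}_{\operatorname{dom}(n_1)}(e_1) \sim 1$ (as $e_1 \in \cm$ is quasi $\mb C$-dense) together with $n_1(e_1) \sim \mb{c}_X(m)$ one reads off $n_1 \leq \mb{c}_X(\mb{c}_X(m))$, while $\mb{c}_X(m) \leq n_1$ gives the reverse; hence $\mb{c}_X(\mb{c}_X(m)) \sim n_1 \in \cm^{QC}$. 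Thus the second closure is already quasi $\mb C$-closed, and quasi idempotency follows the moment I prove $n_1 \leq \mb{c}_X(m)$, equivalently that the dense factor $e_1$ admits a quasi-section $w$ with $e_1 w \sim 1$. The same splitting phenomenon, applied to $s'$, simultaneously yields $n \leq \mb{c}_X(m)$ and, through the cosieve equality $\ra s' j_m \la = \ra e \la$, the density of $j_m$.

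The main obstacle is exactly this splitting step: producing the quasi-section of the dense intermediate map ($s'$, resp.\ $e_1$). The natural route is to feed the triangle $e = s' q$ into the relation $e \APLdown s'$, which is licensed once $s' \in \cm^{QC}$, and dually to exploit $\cm^{QC} = (\ce^{QC})^{\APLup}$ on the closed side. The delicate point is that establishing quasi-closedness of the intermediate map by the continuity computation appears to reintroduce $\mb{c}_X(\mb{c}_X(m))$ and so threatens circularity; the argument must therefore be arranged so that the defining equalities $\ce^{QC} = {}^{\APLdown}(\cm^{QC})$ and $\cm^{QC} = (\ce^{QC})^{\APLup}$ of the quasi factorization structure are invoked jointly with the strong ($SQM$) cancellation to break it. I expect this interlocking of the two triangle-lifting conditions with the strongly-quasi-monomorphism cosieve equality to be the crux; once the quasi-section is secured, both $\mb{c}_X(m) \sim n$ (quasi idempotency) and $j_m \in \ce^{QC}$ (quasi weak hereditariness) fall out.
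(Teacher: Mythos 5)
Your reductions up to the comparison stage are sound and in fact parallel the paper's: $j_m\in\cm$ and the intermediate maps $e$, $s'$ land in $\cm$ by the codomain property, $\mb{c}_X(m)\leq n$ follows from monotonicity and quasi-closedness of $n$, and cancelling the strongly quasi monomorphism $n$ gives the two-sided (sieve and cosieve) comparisons. But the proposal stalls exactly where you say it does, and the route you sketch for the crux cannot be made to work: the relation $e \APLdown s'$ is licensed only once $s'\in\cm^{QC}$, and the equalities $\ce^{QC}={}^{\APLdown}(\cm^{QC})$, $\cm^{QC}=(\ce^{QC})^{\APLup}$ lift only against quasi-closed (resp.\ quasi-dense) maps, so establishing $s'\in\cm^{QC}$ or a quasi-section of $e_1$ is essentially the statement being proved; the circularity you flag is genuine and your proposal contains no mechanism that breaks it. The missing idea is Proposition~\ref{P:closure of quasi dense is iso}: since the dense factor $e$ of $m=ne$ lies in $\cm$ (codomain) and is quasi $\mb C$-dense, $\mb{c}_N(e)$ is outright an \emph{isomorphism} --- no section has to be manufactured by lifting. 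Continuity applied at the level of the \emph{original} factorization $m=ne$ then gives $n \sim n(\mb{c}_N(e)) \leq \mb{c}_X(n(e)) \sim \mb{c}_X(m)$, hence $\mb{c}_X(m)\sim n\in\cm^{QC}$ and quasi idempotency. Note that this is precisely your own continuity computation, run one level lower: you applied it to $\mb{c}_X(m)=n_1e_1$ and so only recovered $\mb{c}_X(\mb{c}_X(m))\sim n_1$, which leaves the gap, whereas applied to $m=ne$ it closes the loop with no splitting step at all. Relatedly, the paper's proof uses only the existence part (a) of the quasi factorization structure; the orthogonality equalities you planned to ``interlock'' are never invoked.

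For quasi weak hereditariness your sketch is also short of an argument: from $\ra s'j_m\la\,=\,\ra e\la$ you get $s'j_m\in\ce^{QC}$, but density of a composite says nothing about its second factor, so $j_m\in\ce^{QC}$ does not follow. The paper instead extracts from the continuity square a comparison $w:\mb{c}_N(M)\to\mb{c}_X(M)$ with $n\,\mb{c}_N(e)=\mb{c}_X(m)\,w$, and proves $w$ is an isomorphism by cancelling the strongly quasi monomorphisms $n$ and $\mb{c}_X(m)$ in $n=n\,[s'w(\mb{c}_N(e))^{-1}]$ and $\mb{c}_X(m)=\mb{c}_X(m)\,[w(\mb{c}_N(e))^{-1}s']$: the cosieve half of the $SQM$ hypothesis makes $w(\mb{c}_N(e))^{-1}$ a split monomorphism and the sieve half makes it a split epimorphism, hence an isomorphism. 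Then $j_e=(\mb{c}_N(e))^{-1}e\in\ce^{QC}$, one checks $wj_e\leq j_m$, and Proposition~\ref{QD is closed under comp. with iso} transports density across the isomorphism and the comparison to give $j_m\in\ce^{QC}$. This isomorphism-of-comparison step, rather than any triangle-lifting against $s'$, is the device your plan is missing.
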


 \begin{proof}
	Consider an $(\ce^{ QC}, \cm^{ QC})$ quasi factorization
	structure$$\xymatrix{M \ar[r]^d & N \ar[r]^n & X}$$of $m = n d \in \cm$,
	where $d\in \ce^{ QC}$ and $n\in \cm^{ QC}$. So $m \leqslant n$ and hence
	$\mb{c}_X(m) \leqslant n$. Since $\ra m \la \hs{1} \su \hs{1}\ra d \la$,
	we have $d\in \cm$. Consider the following diagram.
	$$\xymatrix{& & M \ar@/_3pc/ [ddllrr]_{d} \ar[d]_{j_d} \ar[rr]^{1_M} & & M \ar[d]^{j_m} \ar@/^3pc/ [ddrrll]^{m} & &\\
		& & \mb{c}_N(M) \ar@{} [drr] |{///} \ar[d]_{\mb{c}_N(d)} \ar@ {.>}[rr]^w & & \mb{c}_X(M) \ar[d]^{\mb{c}_X(m)} & & \\
		& & N \ar[rr]_n & & X & &}$$
	Since $d\in \ce^{ QC} \cap \cm$, by Proposition~\ref{P:closure of quasi dense is iso} we have $\mb{c}_N(d)$ is an isomorphism. By \Cref{wdl} there exits$\xymatrix{w: \mb{c}_N(M) \ar[r] & \mb{c}_X(M)}$such that $n \mb{c}_N(d) = \mb{c}_X(m) w$. Since $\mb{c}_N(d)$ is an isomorphism, we have $n \leqslant \mb{c}_X(m)$. Therefore $\mb{c}_X(m) \sim n$, and hence $\mb{c}_X(\mb{c}_X(m)) \sim \mb{c}_X(m)$. Thus $\mb{C}$ is quasi idempotent. Since $\mb{c}_X(m) \leqslant n$, there exists $\hs{-1}\xymatrix{ \mb{c}_X(M) \ar[r]^{d'} & N}\hs{-1}$ such that $\mb{c}_X(m) = n d'$. Therefore $n = n d' w (\mb{c}_N(d))^{-1}$ and hence $\mb{c}_X(m) = \mb{c}_X(m) w(\mb{c}_N(d))^{-1} d'$. Thus $\ra d' w (\mb{c}_N(d))^{-1} \la \hs{1} = \hs{1} \ra 1_{N} \la$ and $\la w(\mb{c}_N(d))^{-1} d' \ra = \la 1_{_{\mb{c}_X(M)}} \ra$. These equivalences imply that $w(\mb{c}_N(d))^{-1}$ is an isomorphism. It follows that $w$ is an isomorphism. It is easy to see that $w j_d \leqslant j_m$. Thus by Proposition~\ref{QD is closed under comp. with iso}(b), we have $j_m \in \ce^{ QC}$.
\end{proof}

 \end{document}